\documentclass{amsproc}
\usepackage{amssymb}
\usepackage[all]{xy}

\date{20 August 2009}

\newtheorem{thm}{Theorem}
\newtheorem{prop}[thm]{Proposition}
\newtheorem{cor}[thm]{Corollary}

\theoremstyle{remark}
\newtheorem{example}[thm]{Example}
\newtheorem{remark}[thm]{Remark}

\newcommand\cs{\ensuremath{C^{*}}}
\newcommand\set[1]{\{\,#1\,\}}

\newcommand\comcsunital{\ensuremath{\operatorname{\mathtt{CommC*1}}}}
\newcommand\topcpt{\ensuremath{\operatorname{\mathtt{Cpct}}}}

\newcommand\topcts{\ensuremath{\operatorname{\mathtt{LCpct}}}} 
\newcommand\cscatnd{%
\ensuremath{\operatorname{\mathtt{C*}}_{\!\!\!\!\mathtt{nd}}}}
\newcommand\comcscatnd{%
\ensuremath{\operatorname{\mathtt{CommC*}}_{\!\!\!\!\mathtt{nd}}}}

\newcommand\cscoactndland{%
\ensuremath{\operatorname{\mathtt{C*coact}}_{\mathtt{nd}}^{\mathtt{n}}}}
\newcommand\cscoactnd{%
\ensuremath{\operatorname{\mathtt{C*coact}}_{\mathtt{nd}}}}

\newcommand\csactnd{%
\ensuremath{\operatorname{\mathtt{C*act}}_{\mathtt{nd}}}}

\newcommand\cscat{\ensuremath{\operatorname{\mathtt{C*}}}}
\newcommand\csact{\ensuremath{\operatorname{\mathtt{C*act}}}}
\newcommand\csactgct{\ensuremath{\cscat(G,(C_{0}(T),\rt))}}
\newcommand\cscoactn{\operatorname{\mathtt{C*coact}}^{\mathtt{n}}}
\newcommand\CP{\operatorname{\mathtt{CP}}}
\newcommand\Fix{\operatorname{\mathtt{Fix}}}

\newcommand\RCP{\operatorname{\mathtt{RCP}}}
\newcommand\RCPgh{\operatorname{\mathtt{RCP}}_{\mathtt{G/H}}}
\newcommand\RCPh{\operatorname{\mathtt{RCP}}_{\mathtt{H}}}
\DeclareMathOperator{\Aut}{Aut}
\newcommand\cb{C_{b}}
\newcommand\tensor{\otimes}
\DeclareMathOperator{\id}{id}
\DeclareMathOperator{\rt}{rt}

\DeclareMathOperator{\supp}{supp}
\newcommand\Z{\mathbb{Z}}
\newcommand\R{\mathbb{R}}

\newcommand\C{\mathbb{C}}
\renewcommand\L{\mathcal{L}}
\newcommand\K{\mathcal{K}}
\newcommand\HH{\mathcal{H}}
\DeclareMathOperator{\Ind}{Ind}
\DeclareMathOperator{\Ad}{Ad}
\DeclareMathOperator{\clsp}{\overline{span}}

\DeclareMathOperator{\Mor}{Mor}
\DeclareMathOperator{\Obj}{Obj}
\def\dashind{\operatorname{\!-Ind}}
\let\phi\varphi

%
\def\<{\langle}
\def\>{\rangle}
\let\ipscriptstyle=\scriptscriptstyle
\def\lipsqueeze{{\mskip -3.0mu}}
\def\ripsqueeze{{\mskip -3.0mu}}
\def\ipcomma{\nobreak\mathrel{,}\nobreak}
\newbox\ipstrutbox
\setbox\ipstrutbox=\hbox{\vrule height8.5pt
width 0pt}
\def\ipstrut{\copy\ipstrutbox}
\def\lip#1<#2,#3>{\mathopen{\relax_{\ipstrut\ipscriptstyle{
#1}}\lipsqueeze
\langle} #2\ipcomma #3 \rangle}
\def\blip#1<#2,#3>{\mathopen{\relax_{\ipstrut
\ipscriptstyle{ #1}}\lipsqueeze\bigl\langle} #2\ipcomma #3 \bigr\rangle}
\def\rip#1<#2,#3>{\langle #2\ipcomma #3
\rangle_{\ripsqueeze\ipstrut\ipscriptstyle{#1}}}
\def\brip#1<#2,#3>{\bigl\langle #2\ipcomma #3
\bigr\rangle_{\ripsqueeze\ipstrut\ipscriptstyle{#1}}}
\def\angsqueeze{\mskip -6mu}
\def\smangsqueeze{\mskip -3.7mu}
\def\trip#1<#2,#3>{\langle\smangsqueeze\langle #2\ipcomma #3
\rangle\smangsqueeze\rangle_{\ripsqueeze\ipstrut\ipscriptstyle{#1}}}
\def\btrip#1<#2,#3>{\bigl\langle\angsqueeze\bigl\langle #2\ipcomma
#3
\bigr\rangle
\angsqueeze\bigr\rangle_{\ripsqueeze\ipstrut\ipscriptstyle{#1}}}
\def\tlip#1<#2,#3>{\mathopen{\relax_{\ipstrut\ipscriptstyle{
#1}}\lipsqueeze \langle\smangsqueeze\langle} #2\ipcomma #3
\rangle\smangsqueeze\rangle}
\def\btlip#1<#2,#3>{\mathopen{\relax_{\ipstrut\ipscriptstyle{
#1}}\lipsqueeze
\bigl\langle\angsqueeze\bigl\langle} #2\ipcomma #3
\bigr\rangle\angsqueeze\bigr\rangle}

\def\ip(#1|#2){(#1\mid #2)}
\def\bip(#1|#2){\bigl(#1 \mid #2\bigr)}
\def\Bip(#1|#2){\Bigl( #1 \bigm| #2 \Bigr)}

\begin{document}

\title[Proper Actions]{
Functoriality of Rieffel's Generalised
Fixed-Point\\ Algebras for Proper Actions}

\author[an Huef]{Astrid an Huef}
\address{School of Mathematics and Statistics\\
The University of New South Wales\\Sydney\\
NSW 2052\\
Australia}
\email{astrid@unsw.edu.au}


\author[Raeburn]{Iain Raeburn}
\address{School  of Mathematics and Applied Statistics, University of
Wollongong, NSW 2522, Australia}
\email{raeburn@uow.edu.au}

\author[Williams]{Dana P. Williams}
\address{Department of Mathematics\\Dartmouth College\\
Hanover, NH 03755\\USA}
\email{dana.williams@dartmouth.edu}

\begin{abstract}
  We consider two categories of $C^*$-algebras; in the first, the
  isomorphisms are ordinary isomorphisms, and in the second, the
  isomorphisms are Morita equivalences. We show how these two
  categories, and categories of dynamical systems based on them, crop
  up in a variety of $C^*$-algebraic contexts. We show that Rieffel's
  construction of a fixed-point algebra for a proper action can be
  made into functors defined on these categories, and that his Morita
  equivalence then gives a natural isomorphism between these functors
  and crossed-product functors. These results have interesting
  applications to non-abelian duality for crossed products.
\end{abstract}

\subjclass[2000]{46L55}
\thanks{This research was supported by the Australian Research Council
  and the Edward Shapiro fund at Dartmouth College.}

\date{\today}
\maketitle


\section*{Introduction}

Let $\alpha$ be an action of a locally compact group $G$ on a
$C^*$-algebra $A$. In \cite{proper}, Rieffel studied a class of proper
actions for which there is a Morita equivalence between the reduced
crossed product $A\rtimes_{\alpha,r}G$ and a generalised fixed-point
algebra $A^\alpha$ sitting inside the multiplier algebra
$M(A)$. Rieffel subsequently proved that $\alpha$ is proper whenever
there is a free and proper $G$-space $T$ and an equivariant embedding
$\phi:C_0(T)\to M(A)$ \cite[Theorem~5.7]{integrable}. In
\cite{kqrproper}, inspired by previous work of Kaliszewski and Quigg
\cite{kqcat}, it was observed that Rieffel's hypothesis says precisely
that $((A,\alpha),\phi)$ is an object in a comma category of dynamical
systems. It therefore becomes possible to ask questions about the
functoriality of Rieffel's construction, and about the naturality of
his Morita equivalence.

These questions have been tackled in several recent papers
\cite{kqrproper,aHKRWproper-n, aHKRW4}, which we believe contain some
very interesting results. In particular, they have substantial
applications to non-abelian duality for $C^*$-algebraic dynamical
systems. However, these papers also contain a confusing array of
categories and functors. So our goal here is to discuss the main
categories and explain why people are interested in them. We will then
review some of the main results of the papers \cite{kqcat,
  kqrproper,aHKRWproper-n, aHKRW4}, and try to explain why we find
them interesting.

In all the categories of interest to us, the objects are either
$C^*$-algebras or dynamical systems involving actions or coactions of
a fixed group on $C^*$-algebras. But when we decide what morphisms to
use, we have to make a choice, and what we choose depends on what sort
of theorems we are interested in. Loosely speaking, we have to decide
whether we want the isomorphisms in our category to be the usual
isomorphisms of $C^*$-algebras, or to be Morita equivalences. We think
that, once we have made that decision, there is a ``correct'' way to
go forward.

We begin in \S\ref{sec:introduction} with a discussion of commutative
$C^*$-algebras; since Morita equivalence does not preserve
commutativity, it is clear that in this case we want isomorphisms to
be the usual isomorphisms. However, even then we have to
do something a little odd: we want the morphisms from $A$ to $B$ to be
homomorphisms $\phi:A\to M(B)$. Once we have the right category, we
can see that operator algebraists have been implicitly working in this
category for years. The motivating example for Kaliszewski and Quigg
was a duality theory for dynamical systems due to Landstad \cite{L},
and our main motivation is, as we said above, to understand Rieffel's
proper actions. We discuss Landstad duality in \S\ref{sec:Ld}. In
\S\ref{sec:QLd}, we discuss its analogue for crossed products by
coactions, which is due to Quigg \cite{Q}, and how this makes contact
with Rieffel's theory of proper actions.

We begin \S\ref{sec:naturality} by showing how the search for
naturality results leads us to a different category $\cscat$ of
$C^*$-algebras in which the morphisms are based on right-Hilbert
bimodules. Categories of this kind have been round much longer, and
\cite{taco,enchilada}, for example, contain a detailed discussion of
how imprimitivity theorems provide natural isomorphisms between
functors with values in $\cscat$. In \S\ref{sec:upgrading-cscat-its},
we discuss a theorem from \cite{aHKRWproper-n} which says that
Rieffel's Morita equivalences give a natural isomorphism between a
crossed-product functor and a fixed-point-algebra functor. This
powerful result implies, for example, that the version in \cite{hrman}
of Mansfield imprimitivity for arbitrary subgroups is natural. We
finish with a brief survey of one of the main results of \cite{aHKRW4}
which uses an approach based on Rieffel's theory to establish
induction-in-stages for crossed products by coactions.

\section{The category $\cscatnd$ and 
commutative $C^*$-algebras} 
\label{sec:introduction}

In our first course in $C^*$-algebras, we learned that commutative
unital $C^*$-algebras are basically the same things as compact
topological spaces.  To make this formal, we note that the assignment
$X\mapsto C(X)$ is the object map in a contravariant functor $C$ from
the category $\topcpt$ of compact Hausdorff spaces and continuous
functions to the category $\comcsunital$ of unital commutative
$C^*$-algebras and unital homomorphisms (which for us are always
$*$-preserving); the morphism $C(f)$ associated to a continuous map
$f:X\to Y$ sends $a\in C(Y)$ to $a\circ f\in C(X)$. Then the
Gelfand-Naimark theorem implies that the functor $C$ is an equivalence
of categories. (This result goes back to \cite{N}, and we will go into
the details of what it means in Theorem~\ref{nondegcommOK} below.)

The Gelfand-Naimark theorem for non-unital algebras says that
commutative $C^*$-algebras are basically the same things as locally
compact topological spaces. However, it is not so easy to put this
version in a categorical context, and in doing so we run into some
important issues which are very relevant to problems involving crossed
products and non-abelian duality. So we will discuss these issues now
as motivation for our later choices.

There is no doubt what the analogue of the functor $C$ does to
objects: it takes a locally compact Hausdorff space $X$ to the
$C^*$-algebra $C_0(X)$ of continuous functions $a:X\to \C$ which
vanish at infinity. However, there is a problem with morphisms:
composing with a continuous function $f:X\to Y$ does not necessarily
map $C_0(Y)$ into $C_0(X)$. For example, consider the function
$f:\R\to \R$ defined by $f(x)=(1+x^2)^{-1}$: any function $a\in
C_0(\R)$ which is identically $1$ on $[0,1]$ satisfies $a\circ f=1$,
and hence $a\circ f$ does not vanish at infinity. One way out is to
restrict attention to the category in which the morphisms from $X$ to
$Y$ are the \emph{proper functions} $f:X\to Y$ for which inverse
images of compact sets are compact, and then on the $C^*$-algebra side
one has to restrict attention to the homomorphisms $\phi:A\to B$ such
that the products $\phi(a)b$ span a dense subspace of $B$. In
\cite{P}, Pedersen does exactly this, and calls these \emph{proper
  homomorphisms}. It turns out, though, that there is a very
satisfactory way to handle arbitrary continuous functions between
locally compact spaces, in which we allow morphisms which take values in $C_b(X)$.

A homomorphism $\phi$ of one $C^*$-algebra $A$ into the multiplier
algebra $M(B)$ of another $C^*$-algebra $B$ is called
\emph{nondegenerate} if $\phi(A)B:=\clsp\{\phi(a)b\}$ is all of
$B$. (This notation is suggestive: the Cohen factorisation theorem  says that
everything in the closed span factors as $\phi(a)b$.) We want to think of the nondegenerate
homomorphisms $\phi:A\to M(B)$ as morphisms from $A$ to $B$. Every
nondegenerate homomorphism $\phi$ extends to a unital homomorphism
$\bar \phi:M(A)\to M(B)$ (see \cite[Corollary~2.51]{tfb}, for
example); the extension has to satisfy
$\bar\phi(m)(\phi(a)b)=\phi(ma)b$, and hence the non-degeneracy
implies that there is exactly one such extension, and that it is
strictly continuous.

The following fundamental proposition is implicit in
\cite[\S1]{kqcat}.

\begin{prop}
  There is a category $\cscatnd$ in which the objects are
  $C^*$-algebras, the morphisms from $A$ to $B$ are the nondegenerate
  homomorphisms from $A$ to $M(B)$, and the composition of $\phi:A\to
  M(B)$ and $\psi:B\to M(C)$ is $\psi\circ\phi:=\bar\psi\circ
  \phi$. The isomorphisms in this category are the usual isomorphisms
  of $C^*$-algebras.
\end{prop}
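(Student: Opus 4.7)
The plan is to check the four categorical axioms for $\cscatnd$---that composition is well-defined, that identities exist and behave correctly, and that composition is associative---and then to identify the isomorphisms. The crucial tool throughout is the statement recalled in the paragraph before the proposition: every nondegenerate homomorphism $\phi:A\to M(B)$ admits a \emph{unique} strictly continuous unital extension $\bar\phi:M(A)\to M(B)$.

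First I would show that $\bar\psi\circ\phi$ is a nondegenerate homomorphism whenever $\phi:A\to M(B)$ and $\psi:B\to M(C)$ are. It is clearly a $*$-homomorphism into $M(C)$. For nondegeneracy, since $\phi(a)b\in B$ and $\bar\psi$ extends $\psi$ multiplicatively, $\bar\psi(\phi(a))\psi(b)=\bar\psi(\phi(a)b)=\psi(\phi(a)b)$, so
\[
\clsp\bar\psi(\phi(A))C\supseteq\clsp\bar\psi(\phi(A))\psi(B)C=\clsp\psi(\phi(A)B)C,
\]
and two successive applications of nondegeneracy (first of $\phi$, then of $\psi$) identify this with $C$. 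For each $A$, the inclusion $\iota_A:A\hookrightarrow M(A)$ is a nondegenerate morphism because $A$ has an approximate identity, and since $\id_{M(A)}$ is itself a strictly continuous extension of $\iota_A$, uniqueness gives $\bar\iota_A=\id_{M(A)}$; the identity axioms $\psi\circ\iota_A=\psi$ and $\iota_B\circ\psi=\psi$ are then immediate. Associativity likewise reduces to uniqueness: given a third morphism $\rho:C\to M(D)$, the maps $\overline{\bar\rho\circ\psi}$ and $\bar\rho\circ\bar\psi$ from $M(B)$ to $M(D)$ are both strictly continuous and both restrict to $\bar\rho\circ\psi$ on $B$, so they coincide, which translates to $(\rho\circ\psi)\circ\phi=\rho\circ(\psi\circ\phi)$.

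The step I expect to require the most care is the characterisation of the isomorphisms, which asserts that anything invertible in $\cscatnd$ actually takes values in $B$ and is an ordinary $*$-isomorphism. Suppose $\phi:A\to M(B)$ has inverse $\psi:B\to M(A)$ in $\cscatnd$, so that $\bar\psi\circ\phi=\iota_A$ and $\bar\phi\circ\psi=\iota_B$; in particular both $\phi$ and $\psi$ are injective, hence isometric with closed image. The key calculation is that for $a\in A$ and $b\in B$ one has $a=\bar\psi(\phi(a))$, and therefore
\[
\psi(b)\cdot a=\psi(b)\,\bar\psi(\phi(a))=\bar\psi\bigl(b\phi(a)\bigr)=\psi\bigl(b\phi(a)\bigr)\in\psi(B),
\]
using multiplicativity of $\bar\psi$ together with the fact that $B$ is an ideal in $M(B)$. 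Thus $\psi(B)\cdot A\subseteq\psi(B)$, and nondegeneracy of $\psi$ combined with closedness of $\psi(B)$ forces $A\subseteq\psi(B)$; applying $\bar\phi$ then yields $\phi(A)\subseteq\bar\phi(\psi(B))=B$. The symmetric argument gives $\psi(B)\subseteq A$, so $\phi$ restricts to an honest $C^*$-isomorphism $A\to B$ with $C^*$-inverse $\psi$. The converse---that any ordinary $*$-isomorphism $\phi:A\to B$ yields an isomorphism $\iota_B\circ\phi$ in $\cscatnd$ with inverse $\iota_A\circ\phi^{-1}$---is routine.
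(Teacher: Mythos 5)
Your proposal is correct and follows essentially the same route as the paper: composition and identities are handled via the uniqueness of the strict extension $\bar\phi$, and associativity via identifying $\bar\psi\circ\bar\phi$ with $\overline{\bar\psi\circ\phi}$. The only cosmetic difference is in the characterisation of isomorphisms, where you first establish $A\subseteq\psi(B)$ and then push forward with $\bar\phi$, whereas the paper computes $\phi(A)=\phi(\psi(B)A)=\bar\phi(\psi(B))\phi(A)=B\phi(A)=B$ in one chain; the ingredients (nondegeneracy, the extension identity, closedness of the images) are identical.
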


\begin{proof}
  It is easy to check that the composition $\bar\psi\circ \phi:A\to
  M(C)$ is nondegenerate, and hence defines a morphism in
  $\cscatnd$. Since $\bar\psi\circ\bar\phi$ is a homomorphism from
  $M(A)$ to $M(C)$ which extends $\bar\psi\circ\phi$, it must be the
  unique extension $\overline{\bar\psi\circ \phi}$. Thus if
  $\theta:C\to M(D)$ is another nondegenerate homomorphism, we have
  \begin{align*}
    \theta\circ(\psi\circ\phi)&= \bar\theta\circ(\psi\circ\phi)=
    \bar\theta\circ(\bar\psi\circ\phi) 
    =(\bar\theta\circ\bar\psi)\circ\phi\\
    &=(\overline{\bar\theta\circ
      \psi})\circ\phi=(\overline{\theta\circ\psi})
    \circ\phi=(\theta\circ\psi)\circ\phi, 
  \end{align*}
  and composition in $\cscatnd$ is associative. The identity maps
  $\id_A:A\to A$, viewed as homomorphisms into $M(A)$, satisfy
  $\bar\id_A=\id_{M(A)}$, and hence have the properties one requires
  of the identity morphisms in $\cscatnd$. Thus $\cscatnd$ is a
  category, as claimed.

  For the last comment, notice first that every isomorphism is
  trivially nondegenerate, and hence defines a morphism in $\cscatnd$,
  which is an isomorphism because it has an inverse. Conversely,
  suppose that $\phi:A\to M(B)$ and $\psi:B\to M(A)$ are inverses of
  each other in $\cscatnd$, so that $\bar\psi\circ \phi=\id_A$ and
  $\bar\phi\circ \psi=\id_B$. Using first the non-degeneracy of $\psi$
  and then the non-degeneracy of $\phi$, we obtain
  \[
  \phi(A)=\phi(\psi(B)A)=\bar\phi(\psi(B))\phi(A)=B\phi(A)=B.
  \]
  Thus $\phi$ has range $B$, and since $\bar\psi|_B=\psi$, we have
  $\psi\circ \phi=\id_A$. The same arguments show that $\phi\circ
  \psi=\id_B$, so $\phi$ is an isomorphism in the usual sense.
\end{proof}


If $f:X\to Y$ is a continuous map between locally compact spaces and
$a\in C_0(Y)$, then $a\circ f$ is a continuous bounded function which
defines a multiplier of $C_0(X)$. For every $b$ in the dense
subalgebra $C_c(X)$, we can choose $a\in C_c(Y)$ such that $a=1$ on
$f(\supp b)$, and then $b=(a\circ f)b$, so $C_0(f):a\mapsto a\circ f$
is a nondegenerate homomorphism from $C_0(Y)$ to $M(C_0(X))$; the
extension $\overline{C_0(f)}$ to $C_b(X)=M(C_0(X))$ is again given by
composition with $f$. We now have a functor $C_0$ from the category
$\topcts$ of locally compact spaces and continuous maps to the full
subcategory $\comcscatnd$ of $\cscatnd$ whose objects are commutative
$C^*$-algebras. This functor has the properties we expect:

\begin{thm}\label{nondegcommOK}
  The functor $C_0:\topcts\to\comcscatnd$ is an equivalence of
  categories.
\end{thm}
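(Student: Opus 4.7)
The plan is to verify that $C_{0}$ is essentially surjective and fully faithful. For essential surjectivity, we invoke the non-unital Gelfand--Naimark theorem: every commutative $C^{*}$-algebra $A$ is isomorphic to $C_{0}(\hat A)$, where the spectrum $\hat A$ is a locally compact Hausdorff space and the Gelfand transform is an isomorphism in $\comcscatnd$ (it is nondegenerate because it is surjective). So every object of $\comcscatnd$ lies in the image of $C_{0}$ up to isomorphism.

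For faithfulness, suppose $C_{0}(f)=C_{0}(g)$ for continuous maps $f,g:X\to Y$. Then $a\circ f=a\circ g$ for every $a\in C_{0}(Y)$; since $C_{0}(Y)$ separates points of $Y$, it follows that $f=g$.

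The bulk of the work is fullness. Given a nondegenerate homomorphism $\phi:C_{0}(Y)\to M(C_{0}(X))=C_{b}(X)$, I would construct $f:X\to Y$ pointwise using the Gelfand correspondence. For each $x\in X$, the evaluation $\mathrm{ev}_{x}:C_{b}(X)\to \C$ is a character, so $\mathrm{ev}_{x}\circ\phi:C_{0}(Y)\to\C$ is a $*$-homomorphism. The key point is that this character is \emph{nonzero}: if it vanished, then $(\phi(a)b)(x)=\phi(a)(x)b(x)=0$ for every $a\in C_{0}(Y)$ and $b\in C_{0}(X)$, so $\mathrm{ev}_{x}$ would vanish on $\clsp\{\phi(a)b\}=C_{0}(X)$ (by non-degeneracy of $\phi$), contradicting the existence of $b\in C_{0}(X)$ with $b(x)\neq 0$. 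Hence $\mathrm{ev}_{x}\circ\phi$ corresponds to evaluation at a unique point $f(x)\in Y$, and by construction $\phi(a)(x)=a(f(x))$, i.e.\ $\phi(a)=a\circ f$ for every $a\in C_{0}(Y)$.

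It remains to verify that $f$ is continuous, which I expect to be the main technical obstacle. One clean way is to observe that $a\circ f=\phi(a)\in C_{b}(X)$ is continuous for every $a\in C_{0}(Y)$, and deduce continuity of $f$ from this: given any open $U\subseteq Y$ and $y_{0}=f(x_{0})\in U$, choose by local compactness an $a\in C_{0}(Y)$ with $a(y_{0})=1$ and $\supp a\subseteq U$; then $\{x:|a(f(x))|>\tfrac{1}{2}\}$ is an open neighbourhood of $x_{0}$ contained in $f^{-1}(U)$. Equivalently, $x\mapsto \mathrm{ev}_{x}\circ\phi$ is continuous from $X$ to the spectrum of $C_{0}(Y)$ in the weak-$*$ topology, and the Gelfand homeomorphism identifies this spectrum with $Y$. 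Either way, $f$ is continuous, $\phi=C_{0}(f)$, and the proof is complete.
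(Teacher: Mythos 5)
Your proposal is correct and follows essentially the same route as the paper's proof: essential surjectivity via the non-unital Gelfand--Naimark theorem, faithfulness because $C_0(Y)$ separates points, and fullness by composing $\phi$ with evaluations $\epsilon_x$ and using nondegeneracy to see the resulting character is nonzero. The only difference is cosmetic --- you spell out the continuity of $f$ with an explicit bump-function argument, whereas the paper compresses it into the observation that $f=\epsilon^{-1}\circ\phi\circ\epsilon$ is continuous (your ``equivalently'' remark).
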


\begin{proof}
  To say that $C_0$ is an equivalence means that there is a functor
  $G:\comcscatnd\to\topcts$ such that $C_0\circ G$ and $G\circ C_0$
  are naturally isomorphic to the identity functors. To verify that it
  is an equivalence, though, it suffices to show that every object in
  $\comcscatnd$ is isomorphic to one of the form $C_0(X)$, which is
  exactly what the Gelfand-Naimark theorem says, and that $C_0$ is a
  bijection on each set $\Mor(X,Y)$ of morphisms (see
  \cite[page~91]{ML}). Injectivity is easy: since $C_0(Y)$ separates
  points of $Y$, $a\circ f=a\circ g$ for all $a\in C_0(Y)$ implies
  that $f(x)=g(x)$ for all $x\in X$. For surjectivity, we suppose that
  $\phi:C_0(Y)\to C_0(X)$ is a nondegenerate homomorphism. Then for
  each $x\in X$, the composition $\epsilon_x\circ\phi$ with the
  evaluation map is a homomorphism from $C_0(Y)$ to $\C$, and the
  non-degeneracy of $\phi$ implies that $\epsilon_x\circ\phi$ is
  non-zero. Since $y\mapsto\epsilon_y$ is a homeomorphism of $Y$ onto
  the maximal ideal space of $C_0(Y)$, there is a unique $f(x)\in Y$
  such that $\epsilon_x\circ\phi=\epsilon_{f(x)}$, and
  $f=\epsilon^{-1}\circ\phi\circ\epsilon$ is continuous. The equation
  $\epsilon_x\circ\phi=\epsilon_{f(x)}$ then says precisely that
  $\phi=C_0(f)$.
\end{proof}

The result in \cite[page~91]{ML} which we have just used is a little
unnerving to analysts. (Well, to us, anyway.) Its proof, for example,
makes carefree use of the axiom of choice. So it is perhaps reassuring
that in the situation of Theorem~\ref{nondegcommOK}, there is a
relatively concrete inverse functor $\Delta$ which takes a commutative
$C^*$-algebra $A$ to its maximal ideal space $\Delta(A)$. (We say
``relatively concrete'' here because the axiom of choice is also used
in the proof that the Gelfand transform is an isomorphism.) The
argument on page 92 of \cite{ML} shows that, once we have chosen
isomorphisms $\eta_A:A\to C(\Delta(A))$ for every commutative
$C^*$-algebra $A$, there is exactly one way to extend $\Delta$ to a
functor in such a way that $\eta:=\{\eta_A:A\in \Obj(\comcscatnd)\}$
is a natural isomorphism. If we choose $\eta_A:A\to C_0(\Delta(A))$ to
be the Gelfand transform, then the functor $\Delta$ takes a morphism
$\phi:A\to M(B)$ to the map
$\Delta(\phi):\omega\to\omega\circ\phi$. So we have the following
naturality result:

\begin{cor}
  The Gelfand transforms $\{\eta_A:A\in \Obj(\comcscatnd)\}$ form a
  natural isomorphism between the identity functor on $\comcscatnd$
  and the composition $C_0\circ\Delta$.
\end{cor}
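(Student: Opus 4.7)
The plan is to check the two defining conditions of a natural isomorphism $\eta : \id_{\comcscatnd} \Rightarrow C_0 \circ \Delta$. The isomorphism part is immediate: each Gelfand transform $\eta_A : A \to C_0(\Delta(A))$ is a $C^*$-algebra isomorphism and so, by the last sentence of the preceding Proposition, an isomorphism in $\comcscatnd$. What really has to be checked is naturality: for every morphism $\phi : A \to M(B)$ of $\comcscatnd$, the equation $C_0(\Delta(\phi)) \circ \eta_A = \eta_B \circ \phi$ must hold in $\comcscatnd$, which, after unpacking the definition of composition there, amounts to the identity
\[
\overline{C_0(\Delta(\phi))} \circ \eta_A = \bar\eta_B \circ \phi
\]
of nondegenerate homomorphisms $A \to M(C_0(\Delta(B))) = C_b(\Delta(B))$.

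I would verify this identity by evaluating both sides at $a \in A$ and then at a character $\omega \in \Delta(B)$. On the left, $\eta_A(a)$ already lies in $C_0(\Delta(A))$, so the multiplier extension of $C_0(\Delta(\phi))$ is not triggered; the description of $C_0$ on morphisms given just before Theorem~\ref{nondegcommOK} says that $C_0(\Delta(\phi))$ is composition with $\Delta(\phi)$, and feeding the formula $\Delta(\phi)(\omega) = \bar\omega \circ \phi$ into the Gelfand identity $\eta_A(a)(\tau) = \tau(a)$ yields $\bar\omega(\phi(a))$.

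The step I expect to be the main obstacle is the matching computation on the right, namely the identification $\bar\eta_B(m)(\omega) = \bar\omega(m)$ for all $m \in M(B)$, which specialised to $m = \phi(a)$ produces the same value $\bar\omega(\phi(a))$ and thereby closes the square. I would prove this auxiliary formula by invoking the uniqueness clause for the strictly continuous unital extension to multipliers discussed in the preamble to the Proposition: for fixed $\omega$, both $m \mapsto \bar\eta_B(m)(\omega)$ and $\bar\omega : M(B) \to \C$ are strictly continuous unital $*$-homomorphisms that restrict to the nondegenerate character $\omega$ on $B$, and hence must coincide on all of $M(B)$. Once that identification is in hand, comparing the two sides of the naturality equation gives $\bar\omega(\phi(a))$ in each case, and the proof is complete.
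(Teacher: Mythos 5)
Your proposal is correct, and it is essentially the ``direct verification'' that the paper alludes to immediately after the corollary (``this result can be easily proved directly: we just need to check that \dots\ the following diagram commutes'') but does not carry out. The paper's actual derivation is different in flavour: it invokes the general categorical fact from \cite[page~92]{ML} that, once the isomorphisms $\eta_A$ have been chosen, there is exactly one way to extend $\Delta$ to a functor making $\eta$ a natural isomorphism, and then simply identifies what that forced functor does to morphisms (namely $\Delta(\phi):\omega\mapsto\omega\circ\phi$). That route gets naturality for free but leans on an abstract existence-and-uniqueness lemma; your route takes the formula for $\Delta(\phi)$ as the definition and checks the square by hand, which is self-contained modulo Gelfand--Naimark. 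The one genuinely nontrivial point in the direct check is exactly the one you isolate: the identification $\bar\eta_B(m)(\omega)=\bar\omega(m)$ for $m\in M(B)$, i.e.\ that the strict extension of the Gelfand transform to $M(B)\to C_b(\Delta(B))$ is still implemented by (extended) characters. Your uniqueness argument for this is sound --- both maps are strictly continuous extensions of $\omega$ (equivalently, both satisfy the defining identity $\Lambda(m)\omega(b)=\omega(mb)$ singled out in the preamble to the Proposition) --- and with it the two sides of the square both evaluate to $\bar\omega(\phi(a))$, as you say. So the proof is complete; it just travels a more computational road than the paper's.
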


Of course, modulo the existence of the isomorphisms $\eta_A$, which is
the content of the (highly non-trivial) Gelfand-Naimark theorem, this
result can be easily proved directly: we just need to check that for
every morphism $\phi:A\to M(B)$ the following diagram commutes in
$\comcscatnd$:
\begin{equation*}
  \xymatrix{A\ar[r]^-{\eta_{A}}\ar[d]_{\phi}
    &C_0(\Delta(A))\ar[d]^{C_0(\Delta(\phi))}\\  
    B\ar[r]_-{\eta_{B}}&C_0(\Delta(B)).}
\end{equation*}

\section{Crossed products and Landstad duality}\label{sec:Ld}

Although the category $\cscatnd$ has only been studied in recent
years, possibly for the first time in \cite{kqcat}, nondegenerate
homomorphisms have been around for years. For example, the unitary
representations $U$ of a locally compact group $G$ on a Hilbert space
$H$ are in one-to-one correspondence with the nondegenerate
representations $\pi_U$ of the group algebras $L^1(G)$ or $C^*(G)$ on
$H$.  In this context, ``nondegenerate'' usually means that the
elements $\pi_U(a)h$ span a dense subspace of $H$, but this is
equivalent to the nondegeneracy of $\pi_U$ as a homomorphism into
$B(H)=M(\K(H))$. More generally, if $u:G\to UM(B)$ is a strictly
continuous homomorphism into the unitary group of a multiplier
algebra, then there is a unique nondegenerate homomorphism
$\pi_u:C^*(G)\to M(B)$, called the integrated form of $u$, from which
we can recover $u$ by composing with a canonical unitary
representation $k_G:G\to UM(C^*(G))$. The composition here is taken in
the spirit of the category $\cscatnd$: it is the composition in the
usual sense of the extension of $\pi_u$ to $M(C^*(G))$ with $k_G$. We
say that $k_G$ is universal for unitary representations of $G$.

One application of this circle of ideas which will be particularly
relevant here is the existence of the comultiplication $\delta_G$ on
$C^*(G)$, which is the integrated form of the unitary representation
$k_G\otimes k_G:G\to UM(C^*(G)\otimes C^*(G))$. Thus $\delta_G$ is by
definition a nondegenerate homomorphism of $C^*(G)$ into
$M(C^*(G)\otimes C^*(G))$. Its other crucial property is
coassociativity: $(\delta_G\otimes\id)\circ
\delta_G=(\id\otimes\delta_G)\circ \delta_G$, where the compositions
are interpreted as being those in the category $\cscatnd$.

Now suppose that $\alpha:G\to\Aut A$ is an action of a locally compact
group $G$ on a $C^*$-algebra. Nondegeneracy is then built into the
notion of covariant representation of the system: a covariant
representation $(\pi, u)$ of a dynamical system $(A,G,\alpha)$ in a
multiplier algebra $M(B)$ consists of a nondegenerate homomorphism
$\pi:A\to M(B)$ and a strictly continuous homomorphism $u:G\to UM(B)$
such that $\pi(\alpha_t(a))=u_t\pi(a)u_t^*$. The crossed product is
then, either by definition \cite{R} or by theorem
\cite[2.34--36]{tfb2}, a $C^*$-algebra $A\rtimes_\alpha G$ which is
generated (in a sense made precise in those references) by a universal
covariant representation $(i_A,i_G)$ of $(A,G,\alpha)$ in
$M(A\rtimes_\alpha G)$. Each covariant representation $(\pi,u)$ in
$M(B)$ has an integrated form $\pi\rtimes u$ which is a nondegenerate
homomorphism of $A\rtimes_\alpha G$ into $M(B)$ such that
$\pi=(\pi\rtimes u)\circ i_A$ and $u=(\pi\rtimes u)\circ i_G$.

The crossed product $A\rtimes_\alpha G$ carries a dual coaction
$\hat\alpha$, which is the integrated form of $i_G\otimes k_G:G\to
UM((A\rtimes_\alpha G)\otimes C^*(G))$. This is another nondegenerate
homomorphism, and the crucial coaction identity
$(\hat\alpha\otimes\id)\circ \hat\alpha=(\id\otimes\delta_G)\circ
\hat\alpha$ again has to be interpreted in the category
$\cscatnd$. (Makes you wonder how we ever managed without $\cscatnd$.)

There is another version of the crossed-product construction which can
be more suitable for spatial arguments, and which is particularly
important for the issues we discuss in this paper. For any
representation $\pi:A\to B(H_\pi)$, there is a regular representation
$(\tilde\pi,U)$ of $(A,G,\alpha)$ on $L^2(G,H_\pi)$ such that
$(\tilde\pi(a)h)(r)=\pi(\alpha_r^{-1}(a))(h(r))$ and
$\lambda_sh(r)=h(s^{-1}r)$ for $h\in L^2(G,H_\pi)$. The reduced
crossed product $A\rtimes_{\alpha,r}G$ is the quotient of
$A\rtimes_\alpha G$ which has the property that every
$\tilde\pi\rtimes \lambda$ factors through a representation of
$A\rtimes_{\alpha,r}G$, and then $\tilde\pi\rtimes \lambda$ is
faithful whenever $\pi$ is \cite[\S7.2]{tfb2}. The reduced crossed
product is also generated by a canonical covariant representation
$(i_A^r,i_G^r)$, and the dual coaction $\hat\alpha$ factors through a
coaction $\hat\alpha^n:A\rtimes_{\alpha,r}G\to
M((A\rtimes_{\alpha,r}G)\otimes C^*(G))$ characterised by
\begin{equation}\label{charalphan}
  \overline{\hat\alpha^n}\circ i_A^r(a)=i_A^r(a)\otimes 1\ \text{ and
  } \overline{\hat\alpha^n}\circ i_G^r(s)=i_G^r(s)\otimes k_G(s).
\end{equation} 
This coaction is called the normalisation of $\hat\alpha$, and is in
particular normal in the sense that the canonical map $j_{A\rtimes G}$
of $A\rtimes_{\alpha,r}G$ into
$M((A\rtimes_{\alpha,r}G)\rtimes_{\hat\alpha} G)$ is injective (see
Proposition~A.61 of \cite{enchilada}).
 
Kaliszewski and Quigg's motivation for working in the category
$\cscatnd$ came from the following characterisation of the
$C^*$-algebras which arise as reduced crossed products.

\begin{thm}[Landstad, Kaliszewski-Quigg]\label{LKQcorZ}
  Suppose that $B$ is a $C^*$-algebra and $G$ is a locally compact
  group. Then there is a dynamical system $(A,\alpha,G)$ such that $B$
  is isomorphic to $A\rtimes_{\alpha,r}G$ if and only if there is a
  morphism $\pi:C^*(G)\to UM(B)$ in $\cscatnd$ and a nondegenerate
  (see Remark~\ref{Lnondeg} below) normal coaction $\delta:B\to
  M(B\otimes C^*(G))$ such that
  \begin{equation}\label{piuequiv}
    (\pi\otimes\id)\circ\delta_G=\delta\circ\pi.
  \end{equation}
\end{thm}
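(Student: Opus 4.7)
The plan is to prove the two directions separately, with the reverse (reconstruction) direction carrying essentially all of the work. For necessity, assume $B=A\rtimes_{\alpha,r}G$ and set $\pi:=i_G^r$ (regarded as the integrated form $C^*(G)\to M(B)$) and $\delta:=\hat\alpha^n$. The homomorphism $\pi$ is nondegenerate as the integrated form of a strictly continuous unitary representation, and $\hat\alpha^n$ is normal and nondegenerate by construction. The equivariance identity $(\pi\otimes\id)\circ\delta_G=\delta\circ\pi$ reduces, on evaluation against the generators $k_G(s)\in UM(C^*(G))$, to
\begin{equation*}
(i_G^r\otimes\id)(k_G(s)\otimes k_G(s)) = i_G^r(s)\otimes k_G(s) = \overline{\hat\alpha^n}(i_G^r(s)),
\end{equation*}
which is precisely the second equation of \eqref{charalphan}. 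Since both sides are nondegenerate morphisms agreeing on a generating set of $C^*(G)$, they agree as morphisms in $\cscatnd$.

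For sufficiency, the plan is to carry out the Landstad construction: given $(B,\delta,\pi)$, set $u_s:=\bar\pi(k_G(s))\in UM(B)$ and define a candidate fixed-point algebra
\begin{equation*}
A := \{\,m\in M(B) : \bar\delta(m)=m\otimes 1,\ \pi(f)m \text{ and } m\pi(f)\in B \text{ for all } f\in C^*(G),\ s\mapsto \Ad u_s(m) \text{ is norm-continuous}\,\}.
\end{equation*}
I would first verify that $A$ is a $C^*$-subalgebra of $M(B)$ and that $\alpha_s:=\Ad u_s$ restricts to a strongly continuous action of $G$ on $A$; both are algebraic once one unpacks how $\bar\delta$ and $\pi$ interact via the equivariance hypothesis. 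Next, $(\iota_A,u)$ (where $\iota_A:A\hookrightarrow M(B)$) is by construction a covariant representation of $(A,\alpha,G)$ in $M(B)$, so its integrated form $\Phi:=\iota_A\rtimes u:A\rtimes_\alpha G\to M(B)$ exists; the image lies in $B$ because $\pi$ is nondegenerate and the defining condition on $A$ makes products $m\pi(f)$ land in $B$.

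The main obstacle is to show that $\Phi$ descends to an \emph{isomorphism} $A\rtimes_{\alpha,r}G\cong B$. Two things are needed: surjectivity onto $B$, and factorisation through the reduced crossed product as a bijection. Surjectivity is the genuinely hard algebraic point: one must show that $\pi(C^*(G))\cdot A$ is dense in $B$, which is proved by exhibiting enough elements of $A$ as ``slices'' $(\id\otimes\omega)\bar\delta(b)\pi(f)$ of elements of $B$ and using the coaction identity together with the equivariance \eqref{piuequiv} to verify that such slices lie in $A$; the density argument then relies on the nondegeneracy of $\delta$. Factorisation through the reduced crossed product, and injectivity of the induced map, is where \emph{normality} of $\delta$ enters decisively: normality provides an embedding of $B$ into a regular representation of the system $(A,\alpha,G)$, so that $\Phi$ agrees with the reduced regular representation $\tilde\pi\rtimes\lambda$ for a faithful $\pi$ in the sense of \S\ref{sec:Ld}, and hence kills precisely the kernel of $A\rtimes_\alpha G\twoheadrightarrow A\rtimes_{\alpha,r}G$.

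Finally, one checks the two constructions are mutually inverse up to the natural isomorphisms: starting from $B=A\rtimes_{\alpha,r}G$ and running the Landstad procedure on $(\hat\alpha^n,i_G^r)$ recovers (a copy of) $A$ as the Landstad algebra inside $M(B)$, using the characterisation \eqref{charalphan} to identify fixed multipliers with $i_A^r(A)$. I expect this verification, together with the slicing/density step for surjectivity, to be the main technical hurdle; the rest amounts to bookkeeping with nondegenerate morphisms and their strict extensions in the category $\cscatnd$.
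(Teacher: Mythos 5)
Your forward direction is exactly the paper's: take $\pi=\pi_{i_G^r}$ and $\delta=\hat\alpha^n$ and verify \eqref{piuequiv} on the generators $k_G(s)$ using the second equation of \eqref{charalphan}. For the converse, however, you take a genuinely different and much heavier route. The paper does \emph{not} run the Landstad construction: it defines $u_s:=\bar\pi(k_G(s))$, passes to the reduction $\delta^r:=(\id\otimes\pi_\lambda)\circ\delta$ (this is where normality is used --- it guarantees that $\delta^r$ is an honest reduced coaction), verifies by a short computation with strict extensions that $\overline{\delta^r}(u_s)=u_s\otimes\lambda_s$, and then invokes Landstad's original theorem (Theorem~\ref{Landoriginal}) as a black box. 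You instead propose to rebuild the Landstad algebra $A\subset M(B)$ from scratch in the full-coaction setting, prove it is a $C^*$-algebra carrying an action $\alpha=\Ad u$, establish surjectivity of the integrated form by a slicing/density argument, and use normality to identify the kernel with that of $A\rtimes_\alpha G\to A\rtimes_{\alpha,r}G$. That plan is sound in outline --- your list of fixed-point conditions is the standard one, and you correctly locate where normality and nondegeneracy of the coaction must enter --- but it amounts to re-proving Landstad's 1979 theorem, and the genuinely hard steps (the density of the slices in $A$, the precise identification of $\ker\Phi$) are announced rather than executed. What the paper's approach buys is brevity: given Theorem~\ref{Landoriginal}, the only content is the translation from the full normal coaction $(\pi,\delta)$ to the reduced data $(u,\delta^r)$. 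What your approach would buy, if carried out in full, is a self-contained proof that never leaves the full-coaction framework; but as written it should be understood as a proof modulo Landstad's theorem rather than an alternative to citing it. (Your final paragraph about the two constructions being mutually inverse is not needed for the if-and-only-if statement; that is the content of the categorical upgrade in Theorem~\ref{thmKQ}, not of Theorem~\ref{LKQcorZ}.)
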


In \cite{kqcat} the authors say that this result follows from a
theorem of Landstad \cite{L}, and it is certainly true that most of
the hard work is done by Landstad's result. But we think it is worth
looking at the proof; those who are not interested in the subtleties
of coactions should probably skip to the end of the proof below. We
begin by stating Landstad's theorem in modern terminology.

\begin{thm}[Landstad, 1979] \label{Landoriginal} Suppose that $B$ is a
  $C^*$-algebra and $G$ is a locally compact group. Then there is a
  dynamical system $(A,G,\alpha)$ such that $B$ is isomorphic to
  $A\rtimes_{\alpha,r}G$ if and only if there are a strictly
  continuous homomorphism $u:G\to UM(B)$ and a reduced coaction
  $\delta:B\to M(B\otimes C_r^*(G))$ such that
  \begin{itemize}
  \item[(a)] $\bar\delta(u_s)=u_s\otimes \lambda_s$ for $s\in G$, and

    \smallskip
  \item[(b)] $\delta(A)(1\otimes C_r^*(G))=A\otimes C_r^*(G)$.
  \end{itemize}
\end{thm}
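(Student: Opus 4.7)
The plan is to prove the two directions separately, with the forward (necessity) direction being a direct computation from the universal properties recalled in \S\ref{sec:Ld}, and the backward (sufficiency) direction being the substantive part of the argument.

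For necessity, suppose $B=A\rtimes_{\alpha,r}G$. I take $u:=i_G^r:G\to UM(B)$, which is strictly continuous by construction, and let $\delta$ be the reduced coaction obtained from $\hat\alpha^n$ by composing in the second leg with the canonical surjection $\Lambda:C^*(G)\to C_r^*(G)$ (so that $\Lambda\circ k_G(s)=\lambda_s$). Condition~(a) then reads off from \eqref{charalphan}. For condition~(b) I expand $\delta(i_A^r(a))(1\otimes c)$ for $a\in A$ and $c\in C_r^*(G)$, using the first half of \eqref{charalphan} to get $(i_A^r(a)\otimes 1)(1\otimes c)=i_A^r(a)\otimes c$, and then use the nondegeneracy of the dual coaction and density of the span of the $i_A^r(a)\otimes c$ to conclude.

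For sufficiency I follow the classical Landstad strategy. Given $(B,u,\delta)$, I define the Landstad algebra $A\subseteq M(B)$ as the set of $m\in M(B)$ satisfying (i) $\bar\delta(m)=m\otimes 1$, (ii) $s\mapsto u_sm u_s^{*}$ is norm-continuous on $G$, and (iii) $mu_f,u_fm\in B$ for every $f\in C_c(G)$, where $u_f:=\int f(s)u_s\,ds$. I verify in turn that $A$ is a $C^*$-subalgebra of $M(B)$; that $\alpha_s(a):=u_sau_s^{*}$ gives a strongly continuous action of $G$ on $A$; and that the inclusion $i:A\hookrightarrow M(B)$ together with $u$ forms a covariant representation of $(A,G,\alpha)$, which by the universal property integrates to a nondegenerate homomorphism $i\rtimes u:A\rtimes_\alpha G\to M(B)$.

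The final and hardest step is to show that $i\rtimes u$ descends to an isomorphism of $A\rtimes_{\alpha,r}G$ onto $B$. Surjectivity is delivered by hypothesis~(b), which, once combined with a slice-map argument, forces products $au_f$ with $a\in A$ and $f\in C_c(G)$ to span a dense subspace of $B$. For the descent to the reduced crossed product, I use condition~(a), which says $u$ is implemented by a $\lambda$-valued unitary, together with the characterisation of $A\rtimes_{\alpha,r}G$ as the quotient through which every regular representation factors. The main obstacle is precisely at this point: proving that the Landstad algebra $A$ defined abstractly by (i)--(iii) is large enough for (b) to make sense and actually force surjectivity, since this is where a spurious completion of the covariance algebra could creep in. The other piece of technical bookkeeping is the careful translation between full coactions into $M(B\otimes C^*(G))$ (as used in \eqref{charalphan}) and the reduced coactions into $M(B\otimes C_r^*(G))$ that appear in the statement; this is harmless but must be done explicitly in the necessity direction.
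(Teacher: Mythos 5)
The first thing to say is that the paper does not prove this theorem at all: it is quoted (in modernised terminology) from Landstad's 1979 paper \cite{L} and then used as a black box in the proof of Theorem~\ref{LKQcorZ}; the authors say explicitly that ``most of the hard work is done by Landstad's result''. So there is no in-paper argument to compare yours against, and your proposal has to stand on its own as a proof of Landstad's theorem. As a roadmap it is the classical one and is correctly laid out: necessity via the dual coaction and $u=i_G^r$, sufficiency via the Landstad algebra defined by your conditions (i)--(iii) with $\alpha=\Ad u$ and the integrated form $i\rtimes u$.

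As a proof, however, it has a genuine gap, and you have located it yourself without closing it. You assert that hypothesis (b) ``forces products $au_f$ to span a dense subspace of $B$'' after ``a slice-map argument'', and that the descent to $A\rtimes_{\alpha,r}G$ follows from (a) together with the universal property of the reduced quotient; but neither step is carried out, and together they \emph{are} Landstad's theorem. Concretely: (i) to get surjectivity you must manufacture enough elements of the abstractly defined $A$ --- the standard device is to apply slice maps to $\delta(b)(1\otimes c)$ and average against $u$, and verifying that the resulting multipliers satisfy all three Landstad conditions and that $\clsp\{au_f\}=B$ is the analytic core of \cite{L}; (ii) knowing that $i\rtimes u$ factors through the reduced crossed product is not enough --- you must also prove the factored map is \emph{injective}, which requires extracting a faithful conditional expectation (or an inverse) from $\delta$, and condition (a) alone does not hand you this. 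A smaller slip in the necessity direction: condition (b) is the nondegeneracy of $\delta$ as a coaction on $B$ (the $A$ in the displayed condition is a slip for $B$, as Remark~\ref{Lnondeg} indicates), so your computation $\delta(i_A^r(a))(1\otimes c)=i_A^r(a)\otimes c$ only gives density in $i_A^r(A)\otimes C_r^*(G)$; you need to work with the generators $i_A^r(a)i_G^r(f)$ of $B$ and invoke the standard argument that dual coactions are nondegenerate.
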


The ``reduced coaction'' appearing in Landstad's theorem is required
to have slightly different properties from the full coactions which we
use elsewhere in this paper, and which are used in \cite{enchilada}
and \cite{kqcat}, for example. A reduced coaction on $B$ is an
injective nondegenerate homomorphism of $B$ into $M(B\otimes
C_r^*(G))$ rather than $M(B\otimes C^*(G))$, and it is required to be
coassociative with respect to the comultiplication $\delta_G^r$ on
$C_r^*(G)$.

\begin{remark}\label{Lnondeg}
  Nowadays, the second condition (b) in Theorem~\ref{Landoriginal} is
  usually absorbed into the assertion that $\delta$ is a
  coaction. Everyone agrees that for $ \delta$ 
  to be a coaction $\delta(A)(1\otimes C_r^*(G))$ must be contained in
  $A\otimes C_r^*(G)$, and Landstad described the requirement of
  equality as ``nondegeneracy'', which in view of our emphasis on
  $\cscatnd$ has turned out to be unfortunate terminology. Coactions
  of amenable or discrete groups are automatically nondegenerate in
  Landstad's sense, and dual coactions are always nondegenerate. We
  therefore follow modern usage and assume that all coactions satisfy
  (b), or its analogue in the case of full coactions. (So (b) can now
  be deleted from Theorem~\ref{Landoriginal} and the word
  ``nondegenerate'' from Theorem~\ref{LKQcorZ}.)
\end{remark}

\begin{proof}[Proof of Theorem~\ref{LKQcorZ}]
  For $B=A\rtimes_{\alpha,r}G$, we take $\delta=\hat\alpha^n$ and
  $\pi=\pi_{i_G^r}$. The second equation in \eqref{charalphan} implies
  that
  \begin{align*}
    \overline{(\pi\otimes\id)\circ\delta_G}(k_G(s))&
    =\overline{(\pi\otimes\id)}(k_G(s)\otimes k_G(s))=i_G^r(s)\otimes
    k_G(s)\\ 
    &=\overline{\hat\alpha^n\circ\pi}(k_G(s))
  \end{align*}
  for all $s\in G$, which implies \eqref{piuequiv}.

  Now suppose that there exist $\pi$ and $\delta$ as described. Then
  we define $u:=\bar\pi\circ k_G^r$, and consider the reduction
  $\delta^r$ of $\delta$, which since $\delta$ is normal is just
  $\delta^r:=(\id\otimes\pi_\lambda)\circ \delta$. Now we compute:
  \begin{align*}
    \overline{\delta^r}(u_s)&= \overline{(\id\otimes\pi_\lambda)\circ
      \delta}(\bar\pi(k_G^r(s)))=
    \overline{\id\otimes\pi_\lambda}\circ \bar{\delta} \circ\bar\pi(k_G^r(s))\\
    &=\overline{\id\otimes\pi_\lambda}\circ
    \overline{\pi\otimes\id}\circ\overline{\delta_G}(k_G^r(s))=
    \overline{\pi\otimes\pi_\lambda} (k_G^r(s)\otimes k_G(s))\\
    &=\bar\pi\circ k_G^r(s)\otimes \lambda_s=u_s\otimes \lambda_s.
  \end{align*}
  Thus $u$ and $\delta^r$ satisfy the hypotheses of Landstad's theorem
  (Theorem~\ref{Landoriginal}), and we can deduce from it that $B$ is
  isomorphic to a reduced crossed product.
\end{proof}

Kaliszewski and Quigg then made two further crucial
observations. First, they recognised that there is a category of
coactions  associated to $\cscat$: the objects in
$\cscoactnd(G)$ consist of a full coaction $\delta$ on a $C^*$-algebra
$B$, and the morphisms from $(B,\delta)$ to $(C,\epsilon)$ are
nondegenerate homomorphisms $\phi:B\to M(C)$ such that
$(\phi\otimes\id)\circ\delta=\epsilon\circ\phi$. Then \eqref{piuequiv}
says that the homomorphism $\pi$ in Corollary~\ref{LKQcorZ} is a
morphism in $\cscoactnd(G)$ from $(C^*(G),\delta_G)$ to
$(B,\delta)$. Second, they knew that for every object $a$ and every
subcategory $D$ in a category $C$ there is a comma category
$a\downarrow D$ in which objects are morphisms $f:a\to x$ in $C$ from
$a$ to objects in $D$, and the morphisms from $(f,x)$ to $(y,g)$ are
morphisms $h:x\to y$ in $D$ such that $h\circ f=g$. Thus Landstad's
theorem identifies the reduced crossed products as the $C^*$-algebras
which can be augmented with a coaction $\delta$ and a homomorphism
$\pi$ to form an object in the comma category
$(C^*(G),\delta_G)\downarrow\cscoactndland(G)$.

The main results in \cite{kqcat} concern crossed-product functors
defined on the category $\csactnd(G)$ whose objects are dynamical
systems $(A,G,\alpha)$ and whose morphisms $\phi:(A,\alpha)\to
(B,\beta)$ are nondegenerate homomorphisms $\phi:A\to M(B)$ such that
$\phi\circ\alpha_s=\beta_s\circ \phi$ for $s\in G$ (where yet again
the compositions are taken in $\cscatnd$). The following theorem is
Theorem~4.1 of \cite{kqcat}.

\begin{thm}[Kaliszewski-Quigg, 2009]\label{thmKQ}
  There is a functor $\CP^r$ from $\csactnd(G)$ to the comma category
  $(C^*(G),\delta_G)\downarrow\cscoactndland(G)$ which takes the
  object $(A,\alpha)$ to $(A\rtimes_{\alpha,r}G,\hat\alpha^r, i_G^r)$,
  and this functor is an equivalence of categories.
\end{thm}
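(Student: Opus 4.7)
The plan is to prove the theorem in three stages: construct $\CP^r$ on morphisms, establish essential surjectivity on objects, and check full faithfulness.

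I would first define $\CP^r$ on morphisms. Given $\phi:(A,\alpha)\to(B,\beta)$ in $\csactnd(G)$, the pair $(\bar{i_B^r}\circ\phi,\,i_G^{r,B})$ is a covariant representation of $(A,G,\alpha)$ in $M(B\rtimes_{\beta,r}G)$ (covariance follows from equivariance of $\phi$ combined with covariance of $(i_B^r,i_G^{r,B})$), so its integrated form yields a nondegenerate homomorphism $\Phi:A\rtimes_\alpha G\to M(B\rtimes_{\beta,r}G)$. To show $\Phi$ descends to the reduced crossed product, I would pick a faithful nondegenerate $\sigma:B\to B(\HH)$ and compose $\Phi$ with the faithful regular representation $\tilde\sigma\rtimes\lambda$ of $B\rtimes_{\beta,r}G$; a direct computation identifies the composition with $\widetilde{\bar\sigma\circ\phi}\rtimes\lambda$, which is a regular representation of $(A,G,\alpha)$ and hence factors through $A\rtimes_{\alpha,r}G$. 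Call the resulting map $\CP^r(\phi)$. Functoriality is automatic from uniqueness of integrated forms. Since $\CP^r(\phi)\circ i_G^{r,A}=i_G^{r,B}$ by construction, the comma-category triangle $\CP^r(\phi)\circ\pi_{i_G^{r,A}}=\pi_{i_G^{r,B}}$ commutes; equivariance with respect to the normal dual coactions can be verified on the generators $i_A^r(a)$ and $i_G^{r,A}(s)$ using \eqref{charalphan}.

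Essential surjectivity is then essentially a restatement of Theorem~\ref{LKQcorZ}. Given any object $(B,\delta,\pi)$ of the comma category, that theorem (with Landstad's Theorem~\ref{Landoriginal} doing the hard work) produces a system $(A,\alpha,G)$ together with an isomorphism $\Psi:A\rtimes_{\alpha,r}G\to B$; inspection of the construction shows $\bar\Psi\circ i_G^{r,A}=\bar\pi\circ k_G$ and that $\Psi$ intertwines $\hat\alpha^n$ with $\delta$, so $\Psi$ is an isomorphism in the comma category between $\CP^r(A,\alpha)$ and $(B,\delta,\pi)$.

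For faithfulness, if $\CP^r(\phi)=\CP^r(\psi)$ then composing with $i_A^r$ gives $\bar{i_B^r}\circ\phi=\bar{i_B^r}\circ\psi$, and injectivity of $i_B^r$ (a consequence of normality of $\hat\beta^n$) forces $\phi=\psi$. The main obstacle is fullness. Given a comma-category morphism $\rho:\CP^r(A,\alpha)\to\CP^r(B,\beta)$, the pair $(\rho\circ i_A^r,\,i_G^{r,B})$ is covariant for $(A,G,\alpha)$ inside $M(B\rtimes_{\beta,r}G)$; for each $a\in A$ the element $\rho(i_A^r(a))$ is $\hat\beta^n$-fixed (from equivariance of $\rho$ together with $\overline{\hat\alpha^n}(i_A^r(a))=i_A^r(a)\otimes 1$) and the map $s\mapsto i_G^{r,B}(s)\,\rho(i_A^r(a))\,i_G^{r,B}(s)^*=\rho(i_A^r(\alpha_s(a)))$ is norm continuous. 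Landstad's characterisation of $i_B^r(B)$ inside $M(B\rtimes_{\beta,r}G)$ then produces a unique $\phi(a)\in M(B)$ with $\bar{i_B^r}(\phi(a))=\rho(i_A^r(a))$. The resulting $\phi:A\to M(B)$ is a nondegenerate equivariant $*$-homomorphism (nondegeneracy from that of $\rho$ and $i_A^r$, equivariance from covariance), and $\CP^r(\phi)=\rho$ is checked on the generators $i_A^r(a)$ and $i_G^{r,A}(s)$ of $A\rtimes_{\alpha,r}G$.
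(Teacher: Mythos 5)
First, a caveat about the comparison: this survey does not actually prove Theorem~\ref{thmKQ}. It quotes it from Theorem~4.1 of \cite{kqcat}, and the only ingredient established here is essential surjectivity, which the text identifies with Theorem~\ref{LKQcorZ} and reduces to Landstad's 1979 theorem. So your sketch is really a reconstruction of the argument in \cite{kqcat}, and on the whole it follows that strategy: integrate the covariant pair $(\overline{i_B^r}\circ\phi,\,i_G^r)$ and descend to the reduced crossed product via a faithful regular representation; get essential surjectivity from Landstad duality; recover $\phi$ from a comma-category morphism $\rho$ by looking at the coaction-fixed part. Your faithfulness argument is correct, though the parenthetical is off: injectivity of $i_B^r$ holds for every reduced crossed product and is not a consequence of normality of $\hat\beta^n$, which concerns the different map $j_{B\rtimes G}$. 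Your remark that essential surjectivity requires checking that Landstad's isomorphism respects the coactions and the maps from $C^*(G)$ --- rather than merely being an isomorphism of $C^*$-algebras --- is exactly the point the survey elides, and it does need the inspection you promise.

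The one step with a genuine gap is fullness. You verify two conditions on $m:=\overline{\rho}(i_A^r(a))$, namely that $\overline{\hat\beta^n}(m)=m\otimes 1$ and that $s\mapsto i_G^r(s)\,m\,i_G^r(s)^*$ is norm continuous, and then invoke ``Landstad's characterisation of $i_B^r(B)$'' to produce $\phi(a)\in M(B)$. But that characterisation has a third condition --- that $m\,i_G^r(f)$ and $i_G^r(f)\,m$ lie in $B\rtimes_{\beta,r}G$ for $f\in C_c(G)$ --- which you do not verify and which will generally fail for your $m$, precisely because $m$ is only a multiplier and you want $\phi(a)$ in $M(B)$ rather than in $B$. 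What your argument actually requires is a multiplier version of the Landstad characterisation, identifying $\overline{i_B^r}(M(B))$ (or at least its $\beta$-continuous part) with the coaction-fixed multipliers having norm-continuous conjugation. Such a statement is true for reduced crossed products and is the technical heart of \cite{kqcat} --- it is also the reason the full-crossed-product analogue needs maximal coactions --- but it is not the statement you cite, and it cannot be waved through as a corollary of the characterisation of $i_B^r(B)$.
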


Landstad's theorem, in the form of Theorem~\ref{LKQcorZ}, says that
$\CP^r$ is \emph{essentially surjective}: every object in the comma
category is isomorphic to one of the form
$\CP^r(A,\alpha)=A\rtimes_{\alpha,r}G$. Thus Theorem~\ref{thmKQ} can
be viewed as an extension of Landstad's theorem, and Kaliszewski and
Quigg call it ``categorical Landstad duality for actions''. They also
obtain an analogous result for full crossed products.

\section{Proper actions and Landstad duality for
  coactions}\label{sec:QLd}

Quigg's version of Landstad duality for crossed products by coactions
\cite{Q} is also easy to formulate in categories based on
$\cscatnd$. Suppose that $\delta$ is a coaction of $G$ on $C$, and let
$w_G$ denote the function $s\mapsto k_G(s)$, viewed as a multiplier of
$C_0(G,C^*(G))$. A covariant repesentation of $(C,\delta)$ in a
multiplier algebra $M(B)$ consists of nondegenerate homomorphisms
$\pi:C\to M(B)$ and $\mu:C_0(G)\to M(B)$ such that
\[
(\pi\otimes\id)\circ\delta(c)=\mu\otimes \id(w_G)(\pi(c)\otimes
1)\mu\otimes \id(w_G)^*\ \text{ for $c\in C$,}
\]
where, as should seem usual by now, the composition is interpreted in
$\cscatnd$.  The crossed product $C\rtimes_\delta G$ is generated by a
universal covariant representation $(j_C,j_G)$ in $M(C\rtimes_\delta
G)$, in the sense that products $j_C(c)j_G(f)$ span a dense subspace
of $C\rtimes_\delta G$. The crossed product carries a dual action
$\hat\delta$ such that
$\hat\delta_s(j_C(c)j_G(f))=j_C(c)j_G(\rt_s(f))$, where $\rt$ is
defined by $\rt_s(f)(t)=f(ts)$. Quigg's theorem identifies the
$C^*$-algebras which are isomorphic to crossed products by coactions.

\begin{thm}[Quigg, 1992]\label{QLduality}
  Suppose that $G$ is a locally compact group and $A$ is a
  $C^*$-algebra. There is a system $(C,\delta)$ such that $A$ is
  isomorphic to $C\rtimes_\delta G$ if and only if there are a
  nondegenerate homomorphism $\phi:C_0(G)\to A$ and an action $\alpha$
  of $G$ on $A$ such that $(A,\alpha,\phi)$ is an object in the comma
  category $(C_0(G),\rt)\downarrow \csactnd(G)$.
\end{thm}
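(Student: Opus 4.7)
The structure parallels the proof of Theorem~\ref{LKQcorZ} above: the forward direction is a direct verification, and the backward direction is the substantive content, which I handle via a Landstad-style recipe.

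For the forward direction, suppose $A \cong C \rtimes_\delta G$. Set $\alpha := \hat\delta$ and $\phi := j_G$. The map $\phi$ is nondegenerate, being part of the universal covariant representation $(j_C, j_G)$, and the defining formula $\hat\delta_s(j_C(c) j_G(f)) = j_C(c) j_G(\rt_s(f))$ gives $\alpha_s \circ \phi = \phi \circ \rt_s$. Hence $\phi : (C_0(G), \rt) \to (A, \alpha)$ is a morphism in $\csactnd(G)$, and $(A, \alpha, \phi)$ is an object of the comma category.

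For the backward direction, given $(A, \alpha, \phi)$ the plan is to reconstruct $(C, \delta)$ from the ``Landstad subalgebra''
\[
C := \set{m \in M(A) : \alpha_s(m) = m \text{ for all } s \in G,\text{ and } \phi(f) m,\ m \phi(f) \in A \text{ for all } f \in C_0(G)},
\]
where $\alpha_s$ denotes the strictly continuous extension of $\alpha_s$ to $M(A)$. I then build a coaction $\delta : C \to M(C \otimes C^*(G))$ by conjugating $c \otimes 1$ with the canonical unitary multiplier $V := (\phi \otimes \id)(w_G) \in M(A \otimes C^*(G))$; the $\rt$-equivariance of $\phi$ is exactly what is needed to see $\delta(C) \subseteq M(C \otimes C^*(G))$ and to derive coassociativity from that of $w_G$. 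Taking $j_C : C \hookrightarrow M(A)$ to be the inclusion and $j_G := \phi$, the definition of $\delta$ is precisely the covariance identity for $(C, \delta)$, and the universal property of $C \rtimes_\delta G$ produces a nondegenerate homomorphism $\Phi := j_C \rtimes \phi : C \rtimes_\delta G \to M(A)$.

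The main obstacle is proving that $\Phi$ is an isomorphism onto $A$. Surjectivity depends on the nondegeneracy of $\phi$: the products $j_C(c) \phi(f)$ must span a dense subspace of $A$, which in turn requires justifying that the Landstad subalgebra $C$ is large enough (a step usually achieved via an averaging argument using $\alpha$ together with the equivariance of $\phi$). Injectivity is the delicate point; the standard route is to invoke crossed-product duality for coactions (Imai--Takai/Katayama), comparing $A \rtimes_\alpha G$ with $(C \rtimes_\delta G) \rtimes_{\hat\delta} G$ and then running the duality back to conclude that $\Phi$ loses no information. Once both directions are in place, the theorem is Quigg's original 1992 result recast in the comma-category language built on $\csactnd(G)$.
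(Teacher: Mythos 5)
Your forward direction coincides exactly with the paper's: the authors likewise take $\phi:=j_G$ and $\alpha:=\hat\delta$ and read the equivariance off the formula $\hat\delta_s(j_C(c)j_G(f))=j_C(c)j_G(\rt_s(f))$. For the converse, however, the paper gives no proof at all --- it says explicitly that ``the hard bit is to prove the converse'' and cites \cite[Theorem~3.3]{Q} --- so there is nothing in the paper to compare your argument against step by step. Your sketch has the right overall shape (build $C$ inside $M(A)$, define $\delta$ by conjugation with the image of $w_G$, show $j_C\rtimes\phi$ is an isomorphism onto $A$), but as written it is a plan rather than a proof: every genuinely hard point --- that your candidate $C$ is a $C^*$-algebra on which $\Ad\bigl((\phi\otimes\id)(w_G)\bigr)$ restricts to a nondegenerate full coaction, that $C$ is large enough for surjectivity, and that $\Phi$ is injective --- is named and then deferred.

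Two concrete points of divergence from Quigg's actual argument are worth flagging if you intend to fill in the details. First, Quigg does not define $C$ by Landstad-style multiplier conditions as you do; he constructs it as the closure of the range of the Olesen--Pedersen averaging map $E$ on $A_0=\phi(C_c(G))A\phi(C_c(G))$ (the same construction the paper uses for $\Fix(A,\alpha,\phi)$ in \S\ref{sec:QLd}), and it is precisely this concrete description that drives the density/surjectivity step you postpone. If you keep your definition of $C$, you owe a proof that it agrees with the closure of $E(A_0)$. Second, Quigg's coaction is the \emph{reduced} coaction $\Ad\bigl((\phi\otimes\pi_\lambda)(w_G)\bigr)$; upgrading to the full coaction $\Ad\bigl((\phi\otimes\id)(w_G)\bigr)$, which is what the comma category over $\csactnd(G)$ requires, rests on \cite[Theorem~4.7]{Q2}, as the paper notes. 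None of this makes your outline wrong, but in its current form the converse still rests entirely on the content of \cite{Q} rather than on an argument you have supplied.
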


When $A=C\rtimes_\delta G$, we can take $\phi:=j_G$ and
$\alpha:=\hat\delta$, and the hard bit is to prove the converse. This
is done in \cite[Theorem~3.3]{Q}. It is then natural to look for a
``categorical Landstad duality for coactions'' which parallels the
results of \cite{kqcat}. However, triples $(A,\alpha,\phi)$ of the
sort appearing in Theorem~\ref{QLduality} had earlier (that is, before
\cite{kqcat}) appeared in important work of Rieffel on proper actions,
and it has proved very worthwhile to follow up this circle of ideas in
Rieffel's context. To explain this, we need to digress a little.

If $\alpha:G\to\Aut A$ is an action of a compact abelian group, then
information about the crossed product can be recovered from the fixed
point algebra $A^{\alpha}$, and, more generally, from the spectral
subspaces
\begin{equation*}
  A^{\alpha}(\omega):=\set{a\in
    A:\alpha_{s}(a)=\overline{\omega(a)}a}\quad\text{for
    $\omega\in\widehat G$.}
\end{equation*}
A fundamental result of Kishimoto and Takai \cite[Theorem~2]{KT} says
that if the spectral subspaces are large in the sense that
$A^{\alpha}(\omega)^{*}A^{\alpha}(\omega)$ is dense in $A^{\alpha}$
for every $\omega\in \widehat G$, then $A\rtimes_{\alpha}G$ is Morita
equivalent to $A^{\alpha}$.  There is as yet no completely
satisfactory notion of a free action of a group on a \cs-algebra (see
\cite{NCP}, for example), but having large spectral subspaces is one
example of such a notion.

When $G$ is locally compact, the fixed-point algebra is often trivial.
For example, if $\rt$ is the action of $G=\Z$ on $\R$ by right
translation, then $f\in C_{0}(\R)^{\rt}$ if and only if $f$ is
periodic with period $1$, which since $f$ vanishes at $\infty$ forces
$f$ to be identically zero.  However, if the orbit space for an action
is nice enough, then the algebra of continuous functions on the orbit
space can be used as a substitute for the fixed-point algebra. A right
action of a locally compact group $G$ on a locally compact space $T$
is called proper if the map $(x,s)\mapsto (x,x\cdot s) :T\times G\to
T\times T$ is proper.  The orbit space $T/G$ for a proper action is
always Hausdorff \cite[Corollary~3.43]{tfb2}, and a classical result
of Green \cite{G} says that if the action of $G$ on $T$ is free and
proper, then $C_{0}(T)\rtimes_{\rt}G$ is Morita equivalent to
$C_{0}(T/G)$ (for this formulation of Green's result see
\cite[Remark~4.12]{tfb2}).  We want to think of $C_{0}(T/G)$ as a
subalgebra of the multiplier algebra $M(C_{0}(G))=\cb(T)$ which is
invariant under the extension $\bar\rt$.

In the past twenty-five years, many researchers have investigated
analogues of free and proper actions for noncommutative \cs-algebras
\cite{RW, proper, exel, meyer, aHRWproper, integrable, aHRWproper2}.
Here we are interested in the notion of proper action $\alpha:G\to\Aut
A$ introduced by Rieffel \cite{proper}. He assumes that there is an
$\alpha$-invariant subalgebra $A_{0}$ of $A$ with properties like
those of the subalgebra $C_{c}(T)$ of $C_{0}(T)$, and that there is an
$M(A)^{\alpha}$-valued inner product on $A_{0}$.  The completion
$Z(A,\alpha)$ of $A_{0}$ in this inner product is a full Hilbert
module over a subalgebra $A^{\alpha}$ of $M(A)^{\alpha}$, which
Rieffel calls the \emph{generalized fixed point algebra} for $\alpha$.
The algebra $\K(Z(A,\alpha))$ of generalized compact operators on
$Z(A,G,\alpha)$ sits naturally as an ideal $E(\alpha)$ in the reduced
crossed product $A\rtimes_{\alpha,r}G$ \cite[Theorem~1.5]{proper}.
The action $\alpha$ is \emph{saturated} when $E(\alpha)$ is all of the
reduced crossed product.  Thus when $\alpha$ is proper and saturated,
$A\rtimes_{\alpha,r}G$ is Morita equivalent to $A^{\alpha}$.
Saturation is a freeness condition: if $G$ acts properly on $T$, then
$\rt:G\to \Aut(C_{0}(T))$ is proper with respect to $C_{c}(G)$, and
the action is saturated if and only if $G$ acts freely \cite[\S3]{MR}.
On the face of it, though, Rieffel's bimodule $Z(A,\alpha)$ and the
fixed-point algebra $A^\alpha$ depend on the choice of subalgebra
$A_{0}$, and it seems unlikely that Rieffel's process is functorial.

The connection with our categories lies in a more recent theorem of
Rieffel which identifies a large family of proper actions for which
there is a canonical choice of the dense subalgebra $A_0$
\cite[Theorem~5.7]{integrable}.

\begin{thm}[Rieffel, 2004]\label{rieffcomma}
  Suppose that a locally compact group $G$ acts freely and properly on
  the right of a locally compact space $T$, and $(A,G,\alpha)$ is a
  dynamical system such that there is a nondegenerate homomorphism
  $\phi:C_0(T)\to M(A)$ satisfying $\phi\circ \rt=\alpha\circ \phi$
  (with composition in the sense of $\cscatnd$). Then $\alpha$ is
  proper and saturated with respect to the subalgebra
  $A_0=\phi(C_c(T))A\phi(C_c(T))$.
\end{thm}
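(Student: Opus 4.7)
The plan is to verify the structural ingredients of Rieffel's definition of a proper action from \cite{proper} — density and $\alpha$-invariance of $A_{0}$, together with the existence of $M(A)^{\alpha}$- and $M(A\rtimes_{\alpha,r}G)$-valued inner products on $A_{0}$ — and then separately to establish saturation $E(\alpha)=A\rtimes_{\alpha,r}G$.

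The density and invariance of $A_{0}$ are routine. Since $\phi$ is nondegenerate, Cohen factorisation applied to an approximate unit in $C_{c}(T)$ shows that both $\phi(C_{c}(T))A$ and $A\phi(C_{c}(T))$ are dense in $A$, whence so is the sandwich $A_{0}$. Equivariance $\alpha_{s}\circ\phi=\phi\circ\rt_{s}$ combined with $\rt_{s}(C_{c}(T))=C_{c}(T)$ then gives $\alpha_{s}(A_{0})=A_{0}$.

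Next I would verify Rieffel's integrability conditions. Writing $a=\phi(f_{1})a'\phi(g_{1})$ and $b=\phi(f_{2})b'\phi(g_{2})$ in $A_{0}$ and using equivariance, we get
\begin{equation*}
  b^{*}\alpha_{s}(a)=\phi(g_{2}^{*})b'^{*}\phi(f_{2}^{*}\rt_{s}(f_{1}))\alpha_{s}(a')\phi(\rt_{s}(g_{1})).
\end{equation*}
Properness of the $G$-action on $T$ makes $\set{s\in G:\supp f_{2}\cap\rt_{s}(\supp f_{1})\ne\emptyset}$ compact, so $s\mapsto b^{*}\alpha_{s}(a)$ has compact support in $G$. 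The norm-convergent integral $\int_{G}\alpha_{s}(a^{*}b)\,ds$ then defines an element of $M(A)$ which a change of variables identifies as $\alpha$-invariant; this is the $M(A)^{\alpha}$-valued right inner product on $A_{0}$. A parallel argument places $s\mapsto a\alpha_{s}(b^{*})$ in $C_{c}(G,A)$, producing the $M(A\rtimes_{\alpha,r}G)$-valued left inner product. These verifications show that $\alpha$ is proper with respect to $A_{0}$.

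The principal obstacle is saturation. The strategy is to transport saturation of the commutative system $(C_{0}(T),\rt)$, which holds by freeness and properness via Green's theorem \cite{G}, through the embedding $\phi$. The ideal $E(\alpha)$ is the closed linear span of the brackets $\int_{G}a\alpha_{s}(b^{*})\,i_{G}^{r}(s)\,ds$ for $a,b\in A_{0}$. Substituting the sandwich form of $a$ and $b$ and pulling the outer $\phi$-factors out of the integral reduces matters to showing that elements of the form $\int_{G}\phi(f\rt_{s}(h^{*}))\,i_{G}^{r}(s)\,ds$, multiplied on both sides by arbitrary elements of $A$, span a dense subspace of $A\rtimes_{\alpha,r}G$. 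By saturation of $\rt$, the corresponding integrals span a dense subspace of $C_{0}(T)\rtimes_{\rt,r}G$; applying the integrated pair $(\bar\phi,i_{G}^{r})$ into $M(A\rtimes_{\alpha,r}G)$ and invoking nondegeneracy of $\phi$ via an approximate-unit argument then yields density in $A\rtimes_{\alpha,r}G$, whence $E(\alpha)=A\rtimes_{\alpha,r}G$.
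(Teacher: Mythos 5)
A preliminary remark: this survey does not prove Theorem~\ref{rieffcomma}; it quotes it from \cite[Theorem~5.7]{integrable}, so there is no in-paper argument to measure you against. Judged on its own terms, your outline has the right architecture. The density and invariance of $A_0$, the compact support of $s\mapsto b^*\alpha_s(a)$ via properness of the $T$-action, and the reduction of saturation to Green's theorem for $(C_0(T),\rt)$ through the nondegenerate homomorphism $\phi\rtimes G$ are all correct; the saturation paragraph in particular is essentially the standard argument, resting on the identity $\langle a,b\rangle_E=i_A(\phi(f_1)a')\bigl[\int_G\phi(g_1\rt_s(g_2^*))\,i_G^r(s)\,ds\bigr]i_A(b'^*\phi(f_2^*))$ obtained from the covariance relation, followed by fullness of Green's inner product (freeness enters exactly here) and nondegeneracy of $\phi$ and $i_A$.

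The gap is in your verification of Rieffel's second properness condition. The integral $\int_G\alpha_s(a^*b)\,ds$ is \emph{not} norm-convergent: $\|\alpha_s(a^*b)\|=\|a^*b\|$ for every $s$, so the integrand is never in $L^1(G,A)$ unless $G$ is compact or $a^*b=0$. What properness of the $T$-action gives is that $c\,\alpha_s(a^*b)$ is compactly supported in $s$ for $c\in A_0$, so the integral can at best converge strictly; and Rieffel's condition demands more, namely that the densely defined map $c\mapsto\int_G c\,\alpha_s(a^*b)\,ds$ is \emph{bounded}, so that $\langle a,b\rangle_D$ is an honest element of $M(A)^\alpha$. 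This is the analytic heart of the theorem, and your proposal does not address it. The standard repair uses $\phi$ once more: after polarising one may take $a=b=\phi(f)a'\phi(g)$, and then $0\le a^*a=\phi(g^*)a'^*\phi(|f|^2)a'\phi(g)\le\|f\|_\infty^2\|a'\|^2\,\phi(|g|^2)$, whence $\int_K\alpha_s(a^*a)\,ds\le\|f\|_\infty^2\|a'\|^2\,\bar\phi\bigl(\int_K\rt_s(|g|^2)\,ds\bigr)$ for every compact $K\subset G$. The right-hand side is uniformly bounded because $t\mapsto\int_G|g(ts)|^2\,ds$ is a continuous function on $T$, constant on orbits and supported on the image in $T/G$ of the compact set $\supp g$, hence bounded (properness again). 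This uniform bound is what makes $\langle a,a\rangle_D$ a bounded $\alpha$-invariant multiplier and $A_0$ a pre-Hilbert module over $A^\alpha$; without it, the sentence ``this is the $M(A)^\alpha$-valued right inner product'' is an assertion rather than a proof.
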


\begin{example}\label{homogex}
  A closed subgroup $H$ of a locally compact group $G$ acts freely and
  properly on $G$, and hence we can apply Theorem~\ref{rieffcomma} to
  the pair $(T,G)=(G,H)$ and to the canonical map $j_G:C_0(G)\to
  M(C\rtimes_\delta G)$. In this case, highly nontrivial results of
  Mansfield \cite{man} can be used to identify the fixed-point algebra
  $(C\rtimes_\delta G)^{\hat\delta}$ with the crossed product
  $C\rtimes_{\delta,r}(G/H)$ by the homogeneous space
  \cite[Remark~3.4]{hrman}. (These crossed products were introduced in
  \cite{ekr}; the relationship with the crossed product
  $C\rtimes_{\delta|} (G/H)$ by the restricted coaction, which makes
  sense when $H$ is normal, is discussed in \cite[Remark~2.2]{ekr}.)
  Then Theorem~3.1 of \cite{hrman} shows that Rieffel's Morita
  equivalence between $(C\rtimes_\delta G)\rtimes_{\hat\delta,r}H$ and
  $(C\rtimes_\delta G)^{\hat\delta}$ extends Mansfield's imprimitivity
  theorem for coactions to arbitary closed subgroups (as opposed to
  the amenable normal subgroups in Mansfield's original theorem
  \cite[Theorem~27]{man} and the normal ones in \cite{kqman}).
\end{example}

From our categorical point of view, the hypotheses on $\phi$ in
Theorem~\ref{rieffcomma} say precisely that
$(A,\alpha,\phi):=((A,\alpha),\phi)$ is an object in the comma
category $(C_0(T),\rt)\downarrow \csactnd(G)$. Then Rieffel's theorem
implies that $(A,\alpha,\phi)\mapsto A^\alpha$ is a construction which
takes objects in the comma category to objects in the category
$\cscatnd$. One naturally asks: is this construction functorial? More
precisely, is there an analogous construction on morphisms which which
makes $(A,\alpha,\phi)\mapsto A^{\alpha}$ into a functor from
$(C_0(T),\rt)\downarrow \csactnd(G)$ to $\cscatnd$?

This question was answered in \cite[\S2]{kqrproper} using a new
construction of Rieffel's generalized fixed-point algebra. The crucial
ingredient is an averaging process $E$ of Olesen and Pedersen
\cite{OP1,OP2}, which was subsequently developed by Quigg in \cite{Q2,
  QR} and used extensively in his proof of
Theorem~\ref{QLduality}. This averaging process $E$ makes sense on the
dense subalgebra $A_0=\phi(C_c(T))A\phi(C_c(T))$, and satisfies
\[
\phi(f)E(\phi(g)a\phi(h))=\int_G \phi(f)\alpha_s(\phi(g)a\phi(h))\,ds\
\text{ for $f,g,h\in C_c(T)$;}
\]
the integral on the right has an unambiguous meaning because
properness implies that $s\mapsto f\rt_s(g)$ has compact support. It
is shown in \cite[Proposition ~2.4]{kqrproper} that the closure of
$E(A_0)$ is a $C^*$-subalgebra of $M(A)$, which we denote by
$\Fix(A,\alpha,\phi)$ to emphasise all the data involved in the
construction. It is shown in \cite[Proposition~3.1]{kqrproper} that
$\Fix(A,\alpha,\phi)$ and Rieffel's $A^\alpha$ are exactly the same
subalgebra of $M(A)$. If $\sigma:(A,\alpha,\phi)\to (B,\beta,\psi)$ is
a morphism in the comma category, so that in particular $\sigma$ is a
nondegenerate homomorphism from $A$ to $M(B)$, then the extension
$\bar\sigma$ maps $\Fix(A,\alpha,\phi)$ into $M(\Fix(B,\beta,\psi))$,
and is nondegenerate. (This is Proposition~2.6 of \cite{kqrproper}; a
gap in the proof of nondegeneracy is filled in Corollary~2.3 of
\cite{aHKRW4}.)

\begin{thm}[Kaliszewski-Quigg-Raeburn, 2008]\label{Fixfunct}
  Suppose that a locally compact group $G$ acts properly on the right
  of a locally compact space $T$. Then the assignments
  $(A,\alpha,\phi)\mapsto \Fix(A,\alpha,\phi)$ and $\sigma\mapsto
  \bar\sigma|_{\Fix(A,\alpha,\phi)}$ form a functor from
  $(C_0(T),\rt)\downarrow \csactnd(G)$ to $\cscatnd$.
\end{thm}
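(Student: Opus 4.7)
The plan is to recognise that the heart of the theorem has already been established in the cited references: the object assignment lands in $\cscatnd$ by \cite[Proposition~2.4]{kqrproper}, and for each morphism $\sigma:(A,\alpha,\phi)\to(B,\beta,\psi)$ in the comma category the restriction $\functf(\sigma):=\bar\sigma|_{\Fix(A,\alpha,\phi)}$ is a nondegenerate homomorphism from $\Fix(A,\alpha,\phi)$ into $M(\Fix(B,\beta,\psi))$ by \cite[Proposition~2.6]{kqrproper} combined with \cite[Corollary~2.3]{aHKRW4}. So the only thing left to check is that $\functf$ preserves identities and composition.

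Preservation of identities is immediate: $\overline{\id_{A}}=\id_{M(A)}$, and restricting to $\Fix(A,\alpha,\phi)$ gives $\id_{\Fix(A,\alpha,\phi)}$, which is the identity morphism on $\Fix(A,\alpha,\phi)$ in $\cscatnd$.

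For composition, given $\sigma:(A,\alpha,\phi)\to(B,\beta,\psi)$ and $\tau:(B,\beta,\psi)\to(C,\gamma,\chi)$, the composite in the comma category coincides with the composite in $\cscatnd$, namely $\tau\circ\sigma=\bar\tau\circ\sigma$; by the uniqueness-of-extension argument from the proof of the first proposition, $\overline{\bar\tau\circ\sigma}=\bar\tau\circ\bar\sigma$. Hence for $m\in\Fix(A,\alpha,\phi)$,
\[
\functf(\tau\circ\sigma)(m)=\bar\tau(\bar\sigma(m)),
\]
whereas composition in $\cscatnd$ gives
\[
\bigl(\functf(\tau)\circ\functf(\sigma)\bigr)(m)=\overline{\functf(\tau)}(\bar\sigma(m)).
\]
To match these, I would show that $\overline{\functf(\tau)}$ agrees with $\bar\tau$ on the image $\bar\sigma\bigl(\Fix(A,\alpha,\phi)\bigr)\subseteq M(\Fix(B,\beta,\psi))$. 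This is a direct check using the defining property of the strict extension: for any $n\in\Fix(B,\beta,\psi)$, the element $\bar\sigma(m)\cdot n$ lies in $\Fix(B,\beta,\psi)$, and so
\[
\overline{\functf(\tau)}(\bar\sigma(m))\cdot\functf(\tau)(n)=\functf(\tau)\bigl(\bar\sigma(m)\cdot n\bigr)=\bar\tau\bigl(\bar\sigma(m)\cdot n\bigr)=\bar\tau(\bar\sigma(m))\cdot\functf(\tau)(n).
\]
The nondegeneracy of $\functf(\tau)$ then forces $\overline{\functf(\tau)}(\bar\sigma(m))=\bar\tau(\bar\sigma(m))$ in $M(\Fix(C,\gamma,\chi))$.

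The main obstacle, such as it is, is purely notational: keeping track of which extensions live in which multiplier algebra, and in particular verifying that $\bar\tau(\bar\sigma(m))$ really does define a multiplier of $\Fix(C,\gamma,\chi)$ rather than merely an element of $M(C)$. This is automatic, however, once one notes that $\bar\tau\circ\bar\sigma$ is the extension of the comma-category morphism $\tau\circ\sigma$ and that the cited morphism result applies to this composite as well.
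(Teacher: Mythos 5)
Your proposal is correct and takes essentially the same approach as the paper: the paper, being a survey, likewise reduces the theorem to the cited results (\cite[Propositions 2.4 and 2.6]{kqrproper} together with \cite[Corollary 2.3]{aHKRW4}) and treats the remaining functor axioms as routine. Your explicit verification that $\overline{\functf(\tau)}\circ\functf(\sigma)$ agrees with $\bar\tau\circ\bar\sigma$ on $\Fix(A,\alpha,\phi)$ is just the standard uniqueness-of-strict-extension argument already modelled in the paper's proof that $\cscatnd$ is a category.
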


To return to the setting of Quigg-Landstad duality, we take
$(T,G)=(G,G)$ in this theorem. This gives us a functor $\Fix$ from
$(C_0(G),\rt)\downarrow \csactnd(G)$ to $\cscatnd$. Because the
fixed-point algebra $\Fix(A,\alpha,\phi)$ is defined using the same
averaging process $E$ as Quigg used in \cite[\S3]{Q},
$\Fix(A,\alpha,\phi)$ is the same as the algebra $C$ constructed by
Quigg (unfortunately for us, he called it $B$). So Quigg proves in
\cite{Q} that
\[
\delta_A(c)=\phi\otimes\pi_\lambda(w_G)(c\otimes
1)\phi\otimes\pi_\lambda(w_G)^*
\]
defines a reduced coaction of $G$ on $C=\Fix(A,\alpha,\phi)$, and that
$A$ is isomorphic to the crossed product $C\rtimes_{\delta^A} G$. An
examination of the proof of \cite[Theorem~4.7]{Q2} shows that the
similar formula
\[
\delta_A^f(c)=\phi\otimes\id(w_G)(c\otimes 1)\phi\otimes\id(w_G)^*
\]
defines the unique full coaction with reduction $\delta_A$. The
argument on page ~2960 of \cite{kqrproper} shows that this
construction respects morphisms, so that $\Fix$ extends to a functor
$\Fix_G$ from $(C_0(G),\rt)\downarrow \csactnd(G)$ to
$\cscoactndland(G)$. The following very satisfactory ``categorical
Landstad duality for coactions'' is Corollary~4.3 of \cite{kqrproper}.

\begin{thm}[Kaliszewski-Quigg-Raeburn, 2008]
  Let $G$ be a locally compact group. Then $(C,\delta)\mapsto
  (C\rtimes_\delta G,\hat\delta,j_G)$ and $\pi\mapsto \pi\rtimes \id$
  form a functor from $\cscoactndland(G)$ to $(C_0(G),\rt)\downarrow
  \csactnd(G)$. This functor is an equivalence of categories with
  quasi-inverse $\Fix_G$.
\end{thm}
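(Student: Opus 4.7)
The plan is to define $\CP$ on morphisms to make $(C,\delta)\mapsto(C\rtimes_\delta G,\hat\delta,j_G)$ into a functor, and then exhibit two natural isomorphisms realising $\Fix_G$ as a quasi-inverse. Quigg's Theorem~\ref{QLduality} already tells us that each $(C\rtimes_\delta G,\hat\delta,j_G)$ is an object of the comma category, so only the morphism clause and the two natural isomorphisms remain.

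For a morphism $\pi:(C,\delta)\to(D,\epsilon)$ in $\cscoactndland(G)$, I would form the pair $(j_D\circ\pi,j_G)$ in $M(D\rtimes_\epsilon G)$; coaction-covariance of $(j_D,j_G)$ together with $(\pi\otimes\id)\circ\delta=\epsilon\circ\pi$ shows this is a covariant representation of $(C,\delta)$. Its integrated form $\pi\rtimes\id:C\rtimes_\delta G\to M(D\rtimes_\epsilon G)$ is nondegenerate, satisfies $(\pi\rtimes\id)\circ j_G=j_G$ by construction, and intertwines the dual actions by checking on the generators $j_C(c)j_G(f)$. Functoriality (identities and composition) follows from uniqueness of integrated forms, and this places $\pi\rtimes\id$ in the comma category.

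Next, I would construct the natural isomorphism $\eta:\CP\circ\Fix_G\Rightarrow\id$. For each $(A,\alpha,\phi)$, the full coaction $\delta_A^f$ on $C:=\Fix(A,\alpha,\phi)$ constructed via $w_G$ has reduction equal to Quigg's $\delta_A$, so Theorem~\ref{QLduality} furnishes a canonical isomorphism $\eta_{(A,\alpha,\phi)}:C\rtimes_{\delta_A^f}G\to A$ determined by $j_C(c)j_G(f)\mapsto c\phi(f)$. To verify naturality for a morphism $\sigma:(A,\alpha,\phi)\to(B,\beta,\psi)$, I would evaluate both $\bar\sigma\circ\eta_{(A,\alpha,\phi)}$ and $\eta_{(B,\beta,\psi)}\circ(\bar\sigma|_C\rtimes\id)$ on the spanning set $j_C(c)j_G(f)$; they both yield $\bar\sigma(c)\psi(f)$, using $\bar\sigma\circ\phi=\psi$, and then uniqueness of integrated forms propagates this to all of $C\rtimes_{\delta_A^f}G$.

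Finally, I would produce the natural isomorphism $\mu:\Fix_G\circ\CP\Rightarrow\id$. The averaging computation that Quigg used in \cite{Q} shows that on the spanning elements $j_C(c)j_G(f)j_C(c')$ of $(C\rtimes_\delta G)_0$, the process $E$ collapses onto $j_C(C)$, so that $\Fix(C\rtimes_\delta G,\hat\delta,j_G)=j_C(C)$; transporting the full coaction defined via $w_G$ along $j_C^{-1}$ returns $\delta$ itself, giving $\mu_{(C,\delta)}$. Naturality reduces to the observation that $\overline{\pi\rtimes\id}\circ j_C=j_D\circ\pi$. The main obstacle is the naturality of $\eta$: the bookkeeping around extensions to multiplier algebras required by the $\cscatnd$-compositions (in particular, keeping $\bar\sigma|_C\rtimes\id$ and its action on $j_C(c)j_G(f)$ aligned with $\delta_A^f$ and $\delta_B^f$) is where the argument is delicate; everything else is standard application of the universal property of the crossed product together with the structural results from \cite{kqrproper,Q,Q2} already in hand.
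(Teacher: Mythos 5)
The paper does not actually prove this theorem: it is quoted verbatim as Corollary~4.3 of \cite{kqrproper}, with the surrounding text supplying only the ingredients (that $\Fix(A,\alpha,\phi)$ coincides with Quigg's algebra $C$, that $\delta_A^f$ is the full coaction built from $w_G$ with reduction $\delta_A$, and that the construction respects morphisms), so there is no in-paper argument to compare against. Your outline is consistent with the strategy of the cited proof --- the isomorphism $C\rtimes_{\delta_A^f}G\cong A$ is exactly what Quigg's Landstad duality (Theorem~\ref{QLduality}) provides, and the identification $\Fix(C\rtimes_\delta G,\hat\delta,j_G)=j_C(C)$ is the averaging computation from \cite{Q} --- and I see no substantive gap; the only slips are minor: the generating set of $(C\rtimes_\delta G)_0$ is $j_G(g)\,a\,j_G(f)$ with $g,f\in C_c(G)$ rather than $j_C(c)j_G(f)j_C(c')$, and you should say explicitly that normality of $\delta$ is what makes $j_C$ injective (so $j_C^{-1}$ exists) and that $\delta_A^f$ is again normal, so that both composites really land in $\cscoactndland(G)$ and $(C_0(G),\rt)\downarrow\csactnd(G)$ respectively.
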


In fact, this is a much more satisfying theorem than its analogue for
actions because we have a specific construction of a quasi-inverse. We
would be interested to see an analogous process for $\Fix$ing over
coactions.

\section{Naturality}
\label{sec:naturality}

Now that we have a functorial version $\Fix$ of Rieffel's generalised fixed-point algebra, we remember that the main point of Rieffel's paper \cite{proper} was to construct a Morita equivalence between $A^\alpha=\Fix(A,\alpha,\phi)$ and the reduced crossed product $A\rtimes_{\alpha, r}G=\RCP(A,\alpha,\phi)$. This equivalence is implemented by an $(A\rtimes_{\alpha, r}G)$\,--\,$\Fix(A,\alpha,\phi)$ imprimitivity bimodule $Z(A,\alpha,\phi)$. There is another category $\cscat$ of $C^*$-algebras in which the isomorphisms are given by imprimitivity bimodules, so it makes sense to ask whether these isomorphisms are natural.
Of course, before discussing this problem, we need to be clear about what the category $\cscat$ is.

If $A$ and $B$ are $C^*$-algebras, then a right-Hilbert $A$\,--\,$B$ bimodule  is a right Hilbert $B$-module $X$ which is also a left
$A$-module via a nondegenerate homomorphism of $A$ into the
algebra $\L(X)$ of bounded adjointable operators on $X$. (These are sometimes called $A$\,--\,$B$ correspondences.) The objects in $\cscat$ are $C^*$-algebras, and the morphisms from $A$ to $B$ are the isomorphism classes $[X]$ of full right-Hilbert $A$\,--\,$B$ bimodules. Every nondegenerate
homomorphism $\phi:A\to M(B)$ gives a right-Hilbert bimodule: view  $B$ as a right Hilbert $B$-module over itself with $\rip
  B<b_{1},b_{2}>:= b_{1}^{*}b_{2}$, and define the action of $A$ by $a\cdot b:=\phi(a)b$. We denote the
  isomorphism class of this bimodule by
  $[\phi]$. In \cite{taco}, it is shown that $[\phi]=[\psi]$ if and only if there exists $u\in UM(B)$ such that $\psi=(\Ad u)\circ \phi$, so we are not just adding more morphisms to $\cscatnd$, we are also slightly changing the morphisms we already have.
    
If $_{A}X_{B}$ and $_{B}Y_{C}$ are right Hilbert bimodules, then we
define the composition using the internal tensor product: $[Y][X]:=[X\tensor_{B}Y].$ The identity morphism $1_A$ on $A$ is $[{}_{A}A_{A}]=[\id_A]$. Now we can see why we have had to take isomorphism
classes of bimodules as our morphisms: the bimodule $A\tensor_{A}X$ representing $1_A[X]=[X][\id_A]$ is only isomorphic to
$X$.  A similar subtlety arises when checking that composition of
morphisms is associative. The details are in \cite[Proposition~2.4]{taco}. In \cite[Proposition~2.6]{taco}, it is shown that the isomorphisms from $A$ to $B$  in $\cscat$ are the classes $[X]$ in which $X$ is an imprimitivity bimodule, so that $X$ also carries a left inner product $\lip A<x,y>$ such that $\lip A<x,y>\cdot z=x\cdot\rip B<y,z>$. Similar results were obtained independently by Landsman \cite{landsman, landsman2} and by Schweizer \cite{S}, and a slightly more general category in which the bimodules are not required to be full as right Hilbert modules was considered in \cite{enchilada}.

Theorem~3.2 of \cite{kqrproper} says that, for every nondegenerate homomorphism $\sigma:A\to M(B)$, the diagram
\begin{equation}
  \label{eq:2}
  \xymatrix@C=8pc@R=3pc{A\rtimes_{\alpha,r}G\ar[r]^-{[Z(A,\alpha,\phi)]}
    \ar[d]_{[\sigma\rtimes\id]} & \Fix(A,\alpha,\phi)\ar[d]^{[\sigma|]} \\
    B\rtimes_{\beta,r}G\ar[r]_-{[Z(B,\beta,\psi)]}&\Fix(B,\beta,\psi)}
\end{equation}
commutes in $\cscat$, which means that 
\[
Z(A,\alpha,\phi)\otimes_{\Fix(A,\alpha,\phi)}\Fix(B,\beta,\psi)\ \text{ and }\ 
(B\rtimes_{\beta,r}G)\otimes_{B\rtimes_{\beta,r}G}Z(B,\beta,\psi)
\]
are isomorphic as right-Hilbert $(A\rtimes_{\alpha,r}G)$\,--\,$\Fix(B,\beta,\psi)$ bimodules. Thus Rieffel's bimodules (or rather, the morphisms in $\cscat$ which they determine) implement a natural isomorphism between the functors $\RCP$ and $\Fix$ from $(C_0(T),\rt)\downarrow \csactnd(G)$ to $\cscat$.

This naturality theorem certainly has interesting applications to nonabelian duality, where it gives naturality for the extension in \cite{hrman} of Mansfield's  imprimitity theorem to closed subgroups  (see \cite[Theorem~6.2]{kqrproper}). However, it is slightly unsatisfactory in that the functors involved go from a category built from $\cscatnd$ to $\cscat$: we were forced to go into $\cscat$ because the bimodules $Z$ do not define morphisms in $\cscatnd$, but in the diagram \eqref{eq:2} we have not fully committed to the move. Our goal in \cite{aHKRWproper-n} was to find versions of the same functors defined on a category built from $\cscat$ --- that is, ones in which the morphisms are implemented by bimodules --- to establish that Rieffel's Morita equivalence gives a natural isomorphism between these functors, and to apply the results to nonabelian duality. We will describe our progress in the next section.

\section{Upgrading to \cscat}
\label{sec:upgrading-cscat-its}

Proposition~3.3 of \cite{taco} says that for every locally compact
group $G$, there is a category $\csact(G)$ whose objects are dynamical
systems $(A,\alpha)=(A,G,\alpha)$ and whose morphisms are obtained by
adding actions to the morphisms of $\cscat$. Formally, if $(A,\alpha)$
and $(B,\beta)$ are objects in $\cscat(G)$ and ${}_AX_B$ is a
right-Hilbert bimodule, then an action of $G$ on a right-Hilbert
bimodule $X$ is a strongly continuous homomorphism of $G$ into the
linear isomorphisms of $X$ such that
\begin{equation*}
  u_{s}(a\cdot x\cdot b)=\alpha_{s}(a)\cdot u_{s}(x)\cdot
  \beta_{s}(b)\quad\text{and}\quad \brip B<u_{s}(x),u_{s}(y)>
  =\beta_{s} \bigl(\rip B<x,y>\bigr), 
\end{equation*}
and the morphisms in $\cscat(G)$ are isomorphism classes of pairs
$(X,u)$.

Next we consider a free and proper action of $G$ on a locally compact
space $T$ and look for an analogue of the comma category for the
system $(C_0(T),\rt)$. The objects are easy: to ensure that $\Fix$ is
defined on objects, we need to insist that every system $(A,\alpha)$
is equipped with a nondegenerate homomorphism $\phi:C_{0}(T)\to M(A)$
which is $\rt$\,--\,$\alpha$ equivariant.  We choose to use the
\emph{semi-comma category}
$\csact\bigl(G,\bigl(C_{0}(T),\rt\bigr)\bigr)$ in which the objects
are triples $(A,\alpha,\phi)$, and the morphisms from
$(A,\alpha,\phi)$ to $(B,\beta,\psi)$ are just the morphisms from
$(A,\alpha)$ to $(B,\beta)$ in $\csact(G)$. In
\cite[Remark~2.4]{aHKRWproper-n} we have discussed our reasons for
adding the maps $\phi$ to our objects and then ignoring them in our
morphisms, and the discussion below of how we $\Fix$ morphisms should
help convince sceptics that this is appropriate.

We know how to $\Fix$ objects in the semi-comma category $\csactgct$,
and we need to describe how to $\Fix$ a morphism $[(X,u)]$ from
$(A,\alpha,\phi)$ to $(B,\beta,\psi)$. We begin by factoring the
morphism $[X]$ in $\cscat$ as the composition
$[{}_{\K(X)}X_B][\kappa_A]$ of the isomorphism associated to the
imprimitivity bimodule ${}_{\K(X)}X_B$ with the morphism coming from
the nondegenerate homomorphism $\kappa_A:A\to M(\K(X))=\L(X)$
describing the left action of $A$ on $X$ (see Proposition~2.27 of
\cite{enchilada}). The action $u$ of $G$ on $X$ gives an action $\mu$
of $G$ on $\K(X)$ such that
$\mu_s(\Theta_{x,y})=\Theta_{u_s(x),u_s(y)}$, and then $\kappa_A$
satisfies $\kappa_A\circ \alpha_s=\mu_s\circ\kappa_A$. So the morphism
$[{}_{(A,\alpha)}(X,u)_{(B,\beta)}]$ in $\cscat(G)$ factors as
$[{}_{(\K(X),\mu)}(X,u)_{(B,\beta)}][\kappa_A]$. Now $\kappa_A$ is a
morphism in $\cscatnd(G)$ from $(A,\alpha, \phi)$ to
$(\K(X),\mu,\kappa_A\circ\phi)$, and hence by Theorem~\ref{Fixfunct}
restricts to a morphism $\kappa_A|$ from $\Fix(A,\alpha, \phi)$ to
$\Fix(\K(X),\mu,\kappa_A\circ\phi)$. We want to define $\Fix$ so that
it is a functor, so our definition must satisfy
\begin{equation}\label{fixcomp}
  \Fix([(X,u)])=\Fix([{}_{(\K(X),\mu)}(X,u)_{(B,\beta)}])\Fix([\kappa_A]).
\end{equation}
Since we don't want to change the meaning of $\Fix$ on morphisms in
$\cscatnd$, our strategy is to define $\Fix([\kappa_A]):=[\kappa_A|]$,
figure out how to $\Fix$ imprimitivity bimodules, and then use
\eqref{fixcomp} to define $\Fix([(X,u)])$.

So we suppose that $(A,\alpha,\phi)$ to $(B,\beta,\psi)$ are objects
in the semi-comma category $\csactgct$, and that $[(X,u)]$ is an
equivariant $(A,\alpha)$\,--\,$(B,\beta)$ imprimitivity bimodule. We
emphasise that, because of our choice of morphisms in $\csactgct$, we
do not make any assumption relating the actions of $\phi$ and $\psi$
on $X$.  We let $\widetilde X:=\{\flat(x):x\in X\}$ be the dual
bimodule, and form the linking algebra
\begin{equation*}
  L(X):=
  \begin{pmatrix}
    A&X\\\widetilde X&B
  \end{pmatrix},
\end{equation*}
as in the discussion following \cite[Theorem~3.19]{tfb}. Then
\begin{equation*}
  L(u):=
  \begin{pmatrix}
    \alpha& u\\ \flat(u) & \beta
  \end{pmatrix}
  \quad\text{and}\quad
  \phi_{L}:= \begin{pmatrix}
    \phi&0\\0&\psi
  \end{pmatrix}
\end{equation*}
define an action $L(u)$ of $G$ on $L(X)$ and a nondegenerate
homomorphism $\phi_{L}$ of $C_{0}(T)$ into $M(L(X))$ which intertwines
$\rt$ and $L(u)$.  Then $(L(X),L(u),\phi_{L})$ is an object in
$\csactgct$, and (reverting to Rieffel's notation to simplify the
formulas) we can form $L(X)^{L(u)}:=\Fix(L(X),L(u),\phi_{L})$. It
follows quite easily from the construction of $\Fix$ in
\cite[\S2]{kqrproper} that the diagonal corners in $L(X)^{L(u)}$ are
$A^\alpha$ and $B^{\beta}$, and we \emph{define} $X^u$ to be the upper
right-hand corner, so that
\begin{equation*}
  L(X)^{L(u)}=
  \begin{pmatrix}
    A^{\alpha}&X^{u}\\ * & B^{\beta}
  \end{pmatrix};
\end{equation*}
with the actions and inner products coming from the operations in
$L(X)^{L(u)}$, $X^{u}$ becomes an
$A^{\alpha}$\,--\,$B^{\beta}$-imprimitivity bimodule (see
\cite[Proposition~3.1]{tfb}). We now define $\Fix([X,u]):=X^u$, and
use \eqref{fixcomp} to define $\Fix$ in general, as described above.

With this definition, Theorem~3.3 of
\cite{aHKRWproper-n} says:

\begin{thm}
  \label{thm-3.3}
  Suppose that $T$ is a free and proper right $G$-space.  Then the
  assignments
  \begin{equation*}
    (A,\alpha,\phi)\mapsto \Fix(A,\alpha,\phi)\quad\text{and}\quad
    [(X,u)]\mapsto \Fix([X,u])
  \end{equation*}
  form a functor $\Fix$ from the semi-comma category $\csactgct$ to
  $\cscat$.
\end{thm}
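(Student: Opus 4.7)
The plan is to verify three items: well-definedness of $\Fix$ on morphisms, preservation of identities, and preservation of composition. Functoriality on the sub-collection of morphisms coming from nondegenerate homomorphisms is already in hand from Theorem~\ref{Fixfunct}. Well-definedness is the mildest point: given an equivariant isomorphism $W:(X,u)\to(X',u')$ of equivariant imprimitivity bimodules, the block matrix with $\id_A,\id_B$ on the diagonal and $W,\flat(W)$ off the diagonal gives an isomorphism $(L(X),L(u),\phi_{L})\cong(L(X'),L(u'),\phi_{L})$ in $\csactgct$; Theorem~\ref{Fixfunct} then produces an isomorphism of fixed-point algebras that respects the corner decomposition, yielding $X^{u}\cong (X')^{u'}$.

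For the identity morphism on $(A,\alpha,\phi)$, namely $[({}_{A}A_{A},\alpha)]$, the linking algebra is canonically $M_{2}(A)$ with diagonal action $\alpha\oplus\alpha$ and diagonal homomorphism $\phi\oplus\phi$. The averaging construction of \cite[\S2]{kqrproper} acts entry-wise on such matrices, so $\Fix(M_{2}(A),M_{2}(\alpha),\phi_{L})=M_{2}(A^{\alpha})$, whose upper-right corner is the identity $A^{\alpha}$\,--\,$A^{\alpha}$ bimodule. Combining this with $\Fix([\kappa_{A}])=[\kappa_{A}|]$ from Theorem~\ref{Fixfunct} and applying \eqref{fixcomp} with $\kappa_{A}=\id_{A}$ forces $\Fix$ to send the identity morphism to the identity.

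The main obstacle is preservation of composition. Given composable morphisms $[(X,u)]:(A,\alpha,\phi)\to(B,\beta,\psi)$ and $[(Y,v)]:(B,\beta,\psi)\to(C,\gamma,\chi)$, their composite is $[(X\tensor_{B}Y,u\tensor v)]$, and the key identity to establish is
\[
(X\tensor_{B}Y)^{u\tensor v}\cong X^{u}\tensor_{B^{\beta}}Y^{v}
\]
as $A^{\alpha}$\,--\,$C^{\gamma}$ imprimitivity bimodules in the case where both $X$ and $Y$ are equivariant imprimitivity bimodules. I would prove this by forming the upper-triangular $3\times 3$ linking algebra with diagonal $A,B,C$, the entries $X,Y$ just above the diagonal, and $X\tensor_{B}Y$ in the top-right corner, equipped with the diagonal action of $G$ and the diagonal homomorphism from $C_{0}(T)$; checking that this is a $C^{*}$-algebra and an object of $\csactgct$ is a routine verification from the inner product and bimodule structures. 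Because the averaging formula $\phi(f)E(\phi(g)a\phi(h))=\int_{G}\phi(f)\alpha_{s}(\phi(g)a\phi(h))\,ds$ acts entry-wise on such matrices, $\Fix$ of this $3\times 3$ system has precisely the six expected corners $A^{\alpha},B^{\beta},C^{\gamma},X^{u},Y^{v},(X\tensor_{B}Y)^{u\tensor v}$, and the multiplication within the fixed-point algebra yields the claimed isomorphism.

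With the imprimitivity-bimodule identity in hand, the general case of composition follows by combining the factorization $[(X,u)]=[{}_{(\K(X),\mu)}(X,u)_{(B,\beta)}]\circ[\kappa_{A}]$ and its analogue for $[(Y,v)]$ with Theorem~\ref{Fixfunct} and the defining equation \eqref{fixcomp}. The most delicate verification in this reduction will be showing that an imprimitivity-bimodule morphism pre-composed (from the left) with a morphism $[\rho]$ coming from a nondegenerate equivariant homomorphism is $\Fix$ed correctly; this amounts to recognising that $\bar\rho$ extends to an equivariant homomorphism of the $3\times 3$ linking algebra and then invoking Theorem~\ref{Fixfunct} on that extension.
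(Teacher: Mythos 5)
Your architecture coincides with the one the paper outlines (and attributes to \cite{aHKRWproper-n}): factor a general morphism as $[{}_{(\K(X),\mu)}(X,u)_{(B,\beta)}][\kappa_A]$, handle the homomorphism part with Theorem~\ref{Fixfunct}, define $\Fix$ of an equivariant imprimitivity bimodule as an off-diagonal corner of the $\Fix$ed linking algebra, and reduce preservation of composition to the key identity $(X\tensor_BY)^{u\tensor v}\cong X^u\tensor_{B^\beta}Y^v$. Your well-definedness and identity checks are fine, and your $3\times3$ linking algebra is a sensible device (though it cannot be ``upper-triangular'' as written --- a $C^*$-algebra must be $*$-closed, so you need the full matrix algebra $\K\bigl((X\tensor_BY)\oplus Y\oplus C\bigr)$ with the dual modules $\widetilde X$, $\widetilde Y$, $\widetilde{X\tensor_BY}$ below the diagonal).

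The genuine gap sits exactly at the step the paper flags as ``surprisingly complicated'': the sentence ``the multiplication within the fixed-point algebra yields the claimed isomorphism.'' Entrywise action of the averaging map $E$ does give you the six corners, and the corner multiplication shows that the balanced map $X^u\times Y^v\to(X\tensor_BY)^{u\tensor v}$, $(x,y)\mapsto xy$, preserves all inner products, hence induces an \emph{isometry} $X^u\tensor_{B^\beta}Y^v\to(X\tensor_BY)^{u\tensor v}$. What it does not give is surjectivity: you must prove that $\clsp(X^u\cdot Y^v)$ exhausts $(X\tensor_BY)^{u\tensor v}$, i.e.\ that $\Fix$ commutes with the completion implicit in the internal tensor product. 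This needs an argument --- for instance, using that $X^u$ is a full $A^\alpha$--$B^\beta$ imprimitivity bimodule so that $A^\alpha=\clsp{}_{A^\alpha}\<X^u,X^u\>$, whence the $(1,3)$ corner satisfies
\begin{equation*}
(X\tensor_BY)^{u\tensor v}=\clsp\bigl(X^u\cdot\widetilde{X^u}\cdot(X\tensor_BY)^{u\tensor v}\bigr)\subseteq\clsp(X^u\cdot Y^v),
\end{equation*}
since $\widetilde{X^u}\cdot(X\tensor_BY)^{u\tensor v}$ lands in the $(2,3)$ corner $Y^v$ --- but some such argument must be supplied, and it is the heart of the proof. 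The same comment applies to the ``mixed'' composition $[\kappa_B]\circ[{}_{\K(X)}X_B]$ arising in your reduction of the general case: you correctly identify it as the delicate point, but asserting that $\bar\kappa_B$ ``extends to the linking algebra'' is a strategy, not yet a verification that $\Fix$ of that composite equals the composite of the $\Fix$es.
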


Proving that $\Fix$ preserves the composition of morphisms is
surprisingly complicated, and involves several non-trivial steps. For
example, we needed to show that if $_{(A,\alpha)}(X,u)_{(B,\beta)}$
and $_{(B,\beta)}Y_{(C,\gamma)}$ are imprimitivity bimodules
implementing isomorphisms in $\csactgct$, then
$(X\tensor_{B}Y)^{u\tensor v}$ is isomorphic to
$X^{u}\tensor_{B^{\beta}}Y^{v}$ as $A^{\alpha}$\,--\,$C^{\gamma}$
imprimitivity bimodules.

It follows from \cite[Theorem~3.7]{enchilada} that $\RCP$ is a functor
from $\csactgct$ to $\cscat$ which takes a morphism $[X,u]$ to the
class of the Combes bimodule $[X\rtimes_{u,r}G]$. We can now state the
main naturality result, which is Theorem~3.5 of \cite{aHKRWproper-n}.

\begin{thm}
  \label{thm-3.5}
  Suppose that a locally compact group $G$ acts freely and properly on
  a locall compact space $T$. Then the Morita equivalences
  $Z(A,\alpha,\phi)$ form a natural isomorphism between the functors
  $\RCP$ and $\Fix$ from $\csactgct$ to $\cscat$.
\end{thm}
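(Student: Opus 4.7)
The plan is to verify the two conditions defining a natural isomorphism of functors $\RCP\Rightarrow\Fix$: (i) that for each object $(A,\alpha,\phi)$ in $\csactgct$ the class $[Z(A,\alpha,\phi)]$ is an isomorphism in $\cscat$, and (ii) that for every morphism $[(X,u)]\colon(A,\alpha,\phi)\to(B,\beta,\psi)$ the square
\begin{equation*}
  \xymatrix@C=4pc{\RCP(A,\alpha,\phi)\ar[r]^-{[Z(A,\alpha,\phi)]}\ar[d]_{\RCP([X,u])} & \Fix(A,\alpha,\phi)\ar[d]^{\Fix([X,u])}\\
  \RCP(B,\beta,\psi)\ar[r]_-{[Z(B,\beta,\psi)]} & \Fix(B,\beta,\psi)}
\end{equation*}
commutes in $\cscat$. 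Part (i) is immediate from Rieffel's original result, because $Z(A,\alpha,\phi)$ is by construction an $(A\rtimes_{\alpha,r}G)$\,--\,$\Fix(A,\alpha,\phi)$ imprimitivity bimodule.

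For (ii), I would exploit the same factorisation $[(X,u)] = [{}_{(\K(X),\mu)}(X,u)_{(B,\beta)}]\circ[\kappa_A]$ used in the very definition of $\Fix([(X,u)])$, where $[\kappa_A]$ comes from the equivariant nondegenerate homomorphism $\kappa_A\colon A\to M(\K(X))$ and the second factor is an equivariant imprimitivity bimodule. Since both $\RCP$ and $\Fix$ are functors, the naturality square for $[(X,u)]$ is the horizontal paste of the squares for the two factors, so it suffices to check naturality separately for (a) morphisms from $\cscatnd$ and (b) morphisms implemented by equivariant imprimitivity bimodules. Case (a) is exactly \cite[Theorem~3.2]{kqrproper}, that is, the commuting diagram \eqref{eq:2} reproduced in the excerpt applied to the equivariant homomorphism $\kappa_A$.

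So the substantive work is case (b), and my strategy is to pass to the linking algebra. Given an equivariant imprimitivity bimodule $(X,u)\colon(A,\alpha)\to(B,\beta)$, the triple $(L(X),L(u),\phi_L)$ is an object of $\csactgct$, and both $L(X)\rtimes_{L(u),r}G = \RCP(L(X),L(u),\phi_L)$ and $L(X)^{L(u)} = \Fix(L(X),L(u),\phi_L)$ inherit the $2\times 2$ corner structure of a linking algebra. The diagonal corners are $A\rtimes_{\alpha,r}G,B\rtimes_{\beta,r}G$ and $A^\alpha,B^\beta$ respectively, while the off-diagonal corners are the Combes bimodule $X\rtimes_{u,r}G$ (which is how $\RCP([X,u])$ is defined by \cite[Theorem~3.7]{enchilada}) and $X^u$ (which is how $\Fix([X,u])$ is defined here). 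Rieffel's imprimitivity bimodule $Z(L(X),L(u),\phi_L)$ inherits a compatible linking-bimodule structure whose diagonal corners are $Z(A,\alpha,\phi)$ and $Z(B,\beta,\psi)$. Reading off its off-diagonal $(A\rtimes_{\alpha,r}G)$\,--\,$B^\beta$ corner in two ways — as a tensor product with $Z(B,\beta,\psi)$ from the right or with $Z(A,\alpha,\phi)$ from the left — produces an isomorphism
\begin{equation*}
  (X\rtimes_{u,r}G)\otimes_{B\rtimes_{\beta,r}G}Z(B,\beta,\psi)\;\cong\;Z(A,\alpha,\phi)\otimes_{A^\alpha}X^u
\end{equation*}
of right-Hilbert $(A\rtimes_{\alpha,r}G)$\,--\,$B^\beta$ bimodules, which is exactly the commutativity of the naturality square in case (b).

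The main obstacle will be showing that Rieffel's construction respects the linking-algebra corner decomposition. Concretely, one must verify that the completion of $L(X)_0=\phi_L(C_c(T))L(X)\phi_L(C_c(T))$ in the $M(L(X))^{L(u)}$-valued inner product splits as a $2\times 2$ array matching the four corners of $L(X)$, with inner products and module actions that restrict to Rieffel's data on the diagonal corners and assemble into a single $A^\alpha$\,--\,$B^\beta$ imprimitivity bimodule off the diagonal. This reduces to verifying that the averaging process $E$ of \cite[\S2]{kqrproper} commutes with the two corner projections — which it should, since those projections lie in $M(L(X))^{L(u)}$ — and then following the same bookkeeping as in \cite[Proposition~3.1]{tfb} to identify the resulting corners.
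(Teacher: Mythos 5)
Your proposal is correct and follows essentially the same route as the paper's own argument: factor the morphism $[(X,u)]$ through $\kappa_A$ and an equivariant imprimitivity bimodule, invoke Theorem~3.2 of \cite{kqrproper} for the nondegenerate-homomorphism half, and handle the imprimitivity-bimodule half by standard linking-algebra techniques. The corner-decomposition argument you sketch for $Z(L(X),L(u),\phi_L)$ is exactly the ``other half'' the paper alludes to.
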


The proof of Theorem~\ref{thm-3.5} relies on factoring morphisms: then
Theorem~3.2 of \cite{kqrproper} gives the result for the nondegenerate
homomorphism, and standard linking algebra techniques give the other
half.

We saw in Example~\ref{homogex} that Rieffel's Morita equivalence can
be used to generalise Mansfield's imprimitivity theorem to crossed
products by homogeneous spaces, and we want to deduce from
Theorem~\ref{thm-3.5} that this imprimitivity theorem gives a natural
isomorphism. To get the imprimitivity theorem in
Example~\ref{homogex}, we applied Rieffel's Theorem~\ref{rieffcomma}
to a crossed product $C\rtimes_\delta G$. So the naturality result we
seek relates the compositions of $\RCP$ and $\Fix$ with a
crossed-product functor.

Suppose as in Example~\ref{homogex} that $H$ is a closed subgroup of a
locally compact group $G$. We know from Theorem~2.15 of
\cite{enchilada} that there is a category $\cscoactn(G)$ whose objects
are normal coactions $(B,\delta)$, and whose morphisms are isomorphism
classes of suitably equivariant right-Hilbert bimodules. We also know
from Theorem~3.13 of \cite{enchilada} that there is a functor
$\CP:\cscoactn(G)\to \csact(H)$, and adding the canonical map $j_G$
makes $\CP$ into a functor with values in the comma category
$(C_0(G),\rt)\downarrow \csact(H)$. We show in
\cite[Proposition~5.5]{aHKRWproper-n} that there is a functor $\RCPgh$
which sends $(B,\delta)$ to the crossed product
$B\rtimes_{\delta,r}(G/H)$ by the homogeneous space $G/H$, and that
this functor coincides with $\Fix\circ \CP$. We saw in
Example~\ref{homogex} that Rieffel's bimodules
$Z(B\rtimes_{\delta}G,\hat\delta|H,j_{G})$ implement an Morita
equivalence between $(B\rtimes_{\delta}G)\rtimes_{\hat\delta|}H$ and
$B\rtimes_{\delta,r}G/H$.  Write $\RCPh$ for the functor from
$\csact(G)$ to $\cscat$ sending $(C,\gamma)\mapsto
C\rtimes_{\gamma,r}H$.  Then the general naturality result above gives
the following theorem, which is Theorem~5.6 of~\cite{aHKRWproper-n}.

\begin{cor}
  \label{thm-5.6}
  Let $H$ be a closed subgroup of $G$. Then Rieffel's Morita
  equivalences $Z(G\rtimes_{\delta}G,\hat\delta|H,j_{G})$ implement a
  natural isomorphism between the functors $\RCPh\circ\CP$ and
  $\RCPgh$ from $\cscoactn(G)$ to $\cscat$.
\end{cor}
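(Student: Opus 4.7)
The plan is to obtain the corollary as a straightforward composition of Theorem~\ref{thm-3.5} (which is the main naturality result) with the crossed-product functor $\CP:\cscoactn(G)\to\csact(H)$ from \cite[Theorem~3.13]{enchilada}, using Proposition~5.5 of \cite{aHKRWproper-n} to identify the composite $\Fix\circ\CP$ with $\RCPgh$. There is no new hard analytic content; the argument is essentially book-keeping about how the various functors fit together.

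First I would specialise Theorem~\ref{thm-3.5} to the situation where the ambient group is $H$ and the free and proper right $H$-space is $T=G$ (with $H$ acting by right translation). The theorem then says that the Rieffel bimodules $Z(A,\alpha,\phi)$, as $(A,\alpha,\phi)$ runs over the objects of $\csact\bigl(H,(C_0(G),\rt)\bigr)$, assemble into a natural isomorphism $Z$ between the two functors $\RCPh$ and $\Fix$ from this semi-comma category to $\cscat$.

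Next I would recall that $\CP$, augmented with the canonical map $j_G$, gives a functor $(B,\delta)\mapsto (B\rtimes_\delta G,\hat\delta|H,j_G)$ into the semi-comma category $\csact\bigl(H,(C_0(G),\rt)\bigr)$. Precomposing the natural isomorphism $Z$ of the previous paragraph with $\CP$ yields a natural isomorphism
\begin{equation*}
Z_{\CP(B,\delta)}=Z(B\rtimes_\delta G,\hat\delta|H,j_G)\colon \RCPh\circ\CP\;\Longrightarrow\; \Fix\circ\CP
\end{equation*}
of functors $\cscoactn(G)\to\cscat$. Proposition~5.5 of \cite{aHKRWproper-n} identifies $\Fix\circ\CP$ with $\RCPgh$ as functors, so this is precisely the desired natural isomorphism between $\RCPh\circ\CP$ and $\RCPgh$, implemented by the bimodules in the statement.

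The only thing that needs genuine checking is that the identification $\Fix\circ\CP=\RCPgh$ from Proposition~5.5 is compatible, on morphisms as well as objects, with the bimodule isomorphism class produced by Theorem~\ref{thm-3.5}; once one knows the two functors agree on the nose, precomposition of a natural isomorphism with a functor is automatic. I expect no real obstacle beyond bookkeeping: the potentially subtle point is just that the morphism part of $\CP$ on a right-Hilbert $(B,\delta)$\,--\,$(C,\epsilon)$ bimodule, followed by Rieffel's $Z$-construction, must be seen to give the same morphism in $\cscat$ as the morphism produced by $\RCPgh$ combined with the relevant naturality square from Theorem~\ref{thm-3.5}, and this is exactly what the statement $\Fix\circ\CP=\RCPgh$ in \cite[Proposition~5.5]{aHKRWproper-n} guarantees.
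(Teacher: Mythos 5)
Your proposal is correct and follows essentially the same route as the paper: the paper likewise obtains the corollary by specialising the general naturality result (Theorem~\ref{thm-3.5}) to the pair $(T,G)=(G,H)$, precomposing with the functor $(B,\delta)\mapsto(B\rtimes_\delta G,\hat\delta|_H,j_G)$ into the semi-comma category, and invoking Proposition~5.5 of \cite{aHKRWproper-n} to identify $\Fix\circ\CP$ with $\RCPgh$. No further comment is needed.
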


Corollary~\ref{thm-5.6} extends Theorem~4.3 of \cite{enchilada} to
non-normal subgroups, and extends Theorem~6.2 of \cite{kqrproper} to
categories based on $\cscat$ rather than ones based on $\cscatnd$.

\section{Induction-in-stages and fixing-in-stages}
\label{sec:appl-nonc-dual}

Rieffel's theory of proper actions seems to be a powerful tool for
studying systems in the comma or semi-comma category associated to a
pair $(T,G)$. Corollary~\ref{thm-5.6} is, we think, an impressive
first example. As another example, we discuss an approach to
induction-in-stages which works through the same general machinery,
and which we carried out in \cite{aHKRW4}.

The original purpose of an imprimitivity theorem was to provide a way
of recognising induced representations (as in, for example,
\cite{mackey}), and Rieffel's theory of Morita equivalence for
$C^*$-algebras was developed to put imprimitivity theorems in a
$C^*$-algebraic context \cite{rieff, rieff2}. One can reverse the
process: a Morita equivalence $X$ between a crossed product
$C\rtimes_{\alpha} G$ and another $C^*$-algebra $B$ gives an induction
process $X\dashind$ which takes a representation of $B$ on $\HH$ to a
representation of $C$ on $X\otimes_B\HH$, and for which there is a
ready-made imprimitivity theorem (see, for example,
\cite[Proposition~2.1]{aHKRW-JOT}). The situation is slightly less
satisfactory when one has a reduced crossed product, but one can still
construct induced representations and prove an imprimitivity theorem.

Mansfield's imprimitivity theorem, as extended to homogeneous spaces
in \cite{hrman}, gives an induction process $\Ind_{G/H}^G$ from
$B\rtimes_{\delta,r}(G/H)$ to $B\rtimes_\delta G$ which comes with an
imprimitivity theorem. One then asks whether this induction process
has the other properties which one would expect. For example, we ask
whether we can induct-in-stages: if we have subgroups $H$, $K$ and $L$
with $H\subset K\subset L$, is $\Ind_{G/K}^{G/H}(\Ind_{G/L}^{G/K}\pi)$
unitarily equivalent to $\Ind_{G/L}^{G/H}\pi$? If the subgroups are
normal and amenable, then the induction processes are those defined by
Mansfield \cite{man}, and induction-in-stages was established in
\cite[Theorem~3.1]{kqrold}. For non-normal subgroups, not much seems
to be known. There are clearly issues: for example, the subgroups $H$
and $K$ have to be normal in $L$ for the three induction processes to
be defined.

We tackled this problem in \cite{aHKRW4} using our semi-comma
category. Suppose that $(T,G)$ is as usual, $N$ is a closed normal
subgroup of $G$, and $(A,\alpha,\phi)$ is an object in $\csact(G,
(C_0(T),\rt))$. Then $N$ also acts freely and properly on $T$, so we
can form the fixed-point algebra $\Fix_N(A,\alpha|_N,\phi)$. The
quotient $G/N$ has a natural action $\alpha^{G/N}$ on
$A^{\alpha|N}:=\Fix(A,\alpha|_N,\phi)$, and the map $\phi$ induces a
homomorphism $\phi_N:C_0(T/N)\to M(A^{\alpha|N})$ such that
$(A^{\alpha|N},\alpha^{G/N},\phi_N)$ is an object in the semi-comma
category $\csact(G/N,(C_0(T/N),\rt))$. We prove in \cite{aHKRW4} that
$\Fix_N$ extends to a functor
\[
\Fix^{G/N}_N:\csact(G, C_0(T),\rt)\to\csact(G/N, (C_0(T/N),\rt)),
\]
and that the functors $\Fix_{G/N}\circ\Fix_N^{G/N}$ and $\Fix_G$ are
naturally isomorphic (see \cite[Theorem~4.5]{aHKRW4}). The first
difficulty in the proof is showing that the functor $\Fix_N$ has an
equivariant version: because the functor $\Fix$ is defined using the
factorisation of morphisms, we have to track carefully through the
constructions in \cite{aHKRWproper-n} to make sure that they all
respect the actions of $G/N$.

Applying this result on ``fixing-in-stages'' with $(T,G)=(L/H,K/H)$,
gives the following version of induction-in-stages, which is
Theorem~7.3 of \cite{aHKRW4}.

\begin{thm}
  \label{thm-7.3}
  Suppose that $\delta$ is a normal coaction of $G$ on $B$, and that
  $H$, $K$ and $L$ are closed subgroups of $G$ such that $H\subset K
  \subset L$ with both $H$ and $K$ normal in $L$.  Then for every
  representation $\pi$ of $B\rtimes_{\delta,r}(G/L)$, the
  representation $\Ind_{G/K}^{G/H}( \Ind_{G/L}^{G/K}\pi)$ is unitarily
  equivalent to $\Ind_{G/L}^{G/H}\pi$.
\end{thm}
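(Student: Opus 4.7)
The plan is to deduce Theorem~\ref{thm-7.3} from the fixing-in-stages natural isomorphism $\Fix_{\widetilde G}\cong\Fix_{\widetilde G/\widetilde N}\circ\Fix_{\widetilde N}^{\widetilde G/\widetilde N}$ of \cite[Theorem~4.5]{aHKRW4}, combined with the naturality of Rieffel's Morita equivalence (Theorem~\ref{thm-3.5}), and then translated to representations.

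The starting data is the object $(B\rtimes_\delta G,\hat\delta|_L,j_G)$ in the semi-comma category $\csact(L,(C_0(G),\rt))$, with $L$ acting freely and properly on $G$ by right translation. By Example~\ref{homogex}, its fixed-point algebras under $H$, $K$, and $L$ are $B\rtimes_{\delta,r}(G/H)$, $B\rtimes_{\delta,r}(G/K)$, and $B\rtimes_{\delta,r}(G/L)$ respectively, and each induction process $\Ind_{G/M}^{G/M'}$ appearing in the statement is implemented by tensoring with the Rieffel Morita equivalence bimodule $Z(B\rtimes_{\delta,r}(G/M'),\hat\delta^{M/M'},j_{G/M'})$ followed by restriction along the canonical embedding (in the spirit of \cite[Proposition~2.1]{aHKRW-JOT}).

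Since $H,K\triangleleft L$, the quotient $K/H$ is a closed normal subgroup of $L/H$ with $(L/H)/(K/H)=L/K$. I would apply fixing-in-stages with $(T,\widetilde G,\widetilde N)=(G/H,L/H,K/H)$ to the $L/H$-equivariant object $\Fix_H^{L/H}(B\rtimes_\delta G,\hat\delta|_L,j_G)\cong(B\rtimes_{\delta,r}(G/H),\hat\delta^{L/H},j_{G/H})$, obtaining the natural isomorphism
\[
\Fix_{L/H}\bigl(B\rtimes_{\delta,r}(G/H),\hat\delta^{L/H},j_{G/H}\bigr)\cong\Fix_{L/K}\circ\Fix_{K/H}^{L/K}\bigl(B\rtimes_{\delta,r}(G/H),\hat\delta^{L/H},j_{G/H}\bigr),
\]
with both sides canonically identified with $B\rtimes_{\delta,r}(G/L)$. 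Invoking Theorem~\ref{thm-3.5} (that $Z$ is a natural isomorphism $\RCP\cong\Fix$) lifts this algebra-level identification to a bimodule-level identification; together with standard crossed-product-in-stages, which matches $(B\rtimes_{\delta,r}(G/H))\rtimes(L/H)$ with $((B\rtimes_{\delta,r}(G/H))\rtimes(K/H))\rtimes(L/K)$, this yields an isomorphism between the bimodule $Z(B\rtimes_{\delta,r}(G/H),\hat\delta^{L/H},j_{G/H})$ implementing $\Ind_{G/L}^{G/H}$ and the composition of bimodules implementing $\Ind_{G/L}^{G/K}$ followed by $\Ind_{G/K}^{G/H}$. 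Translating back to representations via these bimodules gives the unitary equivalence $\Ind_{G/K}^{G/H}(\Ind_{G/L}^{G/K}\pi)\cong\Ind_{G/L}^{G/H}\pi$.

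The main obstacle will be verifying that the equivariant functor $\Fix_{K/H}^{L/K}$, applied to the $L/H$-equivariant Rieffel bimodule produced by $\Fix_H^{L/H}$, genuinely reproduces $Z(B\rtimes_{\delta,r}(G/H),\hat\delta^{K/H},j_{G/H})$ for the restricted residual coaction, and more broadly that the residual actions and intermediate maps $\phi_N$ produced by the nested applications of fixing-in-stages are mutually compatible. Because $\Fix$ on $\cscat$-style morphisms is defined via the factorisation through linking algebras and the homomorphisms $\kappa_A$ from Section~\ref{sec:upgrading-cscat-its}, this requires chasing the constructions of \cite{aHKRWproper-n} through each equivariant layer, and is the technical core of \cite{aHKRW4}.
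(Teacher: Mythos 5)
Your proposal takes essentially the same route as the paper, which deduces Theorem~\ref{thm-7.3} by applying the fixing-in-stages natural isomorphism of \cite[Theorem~4.5]{aHKRW4} (with the group $L/H$ acting on $G/H$ and normal subgroup $K/H$, which is the correct reading of the paper's parenthetical ``$(T,G)=(L/H,K/H)$'') together with the naturality of Rieffel's Morita equivalence from Theorem~\ref{thm-3.5}. Your identification of the technical core --- checking that the residual actions, the maps $\phi_N$, and the equivariant versions of $\Fix$ on factored morphisms are mutually compatible --- is exactly the difficulty the paper flags.
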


Obviously this is not the last word on the subject, and the normality
hypotheses on subgroups are irritating. However, Mansfield's induction
process is notoriously hard to work with, and it seems remarkable that
one can prove very much at all about an induction process which is
substantially more general than his. We think that Rieffel's theory of
proper actions is proving to be a remarkably malleable and powerful
tool.

\end{document}